\documentclass[12pt,a4paper,reqno]{amsart}
\usepackage{amsmath}
\usepackage{amssymb}
\usepackage{amsthm}
\usepackage{mathrsfs}
\usepackage{enumitem}
\usepackage{etoolbox}
\usepackage{hyperref}
\hypersetup{
  colorlinks=true,
  citecolor=blue,
  linkcolor=blue,
  urlcolor=blue
}
\numberwithin{equation}{section}

\theoremstyle{plain}
\newtheorem{theorem}{Theorem}[section]
\newtheorem{lemma}{Lemma}[section]

\theoremstyle{definition}

\newtheorem{remark}{Remark}[section]
\makeatletter
\def\@settitle{%
  \begin{center}%
    \baselineskip14\p@
    {\Large \@title\par}%
  \end{center}%
}
\patchcmd{\@setauthors}{\MakeUppercase}{}{}{}
\patchcmd{\section}{\scshape}{}{}{}
\makeatother

\begin{document}
\title
[{Romanoff type theorems for polynomials over finite fields}]
{Romanoff type theorems for polynomials over finite fields}

\author
[Enci Wang]
{Enci Wang}

\address{Department of Mathematics, South China University of Technology, Guangzhou, Guangdong 510640, China}
\email{maecw@mail.scut.edu.cn}

\keywords{Romanoff's theorem; polynomials over finite fields; irreducible polynomials; representation of polynomials.}
\subjclass[2020]{11T55, 11P32, 11T06}

\begin{abstract}
Given a polynomial \(g\) of degree \(\delta>0\) over a finite field, we study monic polynomials \(f\) of degree \(n\) that can be written as \(f=h+g^k\), where \(h\) is a monic irreducible polynomial of degree \(n\) and \(k\in\mathbb{N}\) with \(\delta k<n\). Following Erdős, we show that some \(f\) admit at least \(c\log n\) such representations. If \(\delta\rho_g<1\), where \(\rho_g=\varphi_q(g)/|g|\), then a positive proportion of degree-\(n\) polynomials admits at least two such representations. More generally, for sums of powers \(g^{\lfloor k_i^{r_i}\rfloor}\), we prove that a positive proportion of polynomials of degree \(n\) is representable whenever \(\sum_{i=1}^{t}r_i^{-1}\ge 1\), while certain residue classes modulo some irreducible polynomial contain no representable polynomial.
\end{abstract}

\maketitle

\section{Introduction}
In 1934, Romanoff \cite{romanoff1934} proved that the set of positive odd integers representable as
\[
p+2^{k},\qquad p\in\mathcal P,\ k\in\mathbb N,
\]
has positive lower asymptotic density, where \(\mathcal P\) denotes the set of all primes and \(\mathbb N\) the set of positive integers. 
This initiated the study of Romanoff-type problems in additive number theory \cite{nathanson1996}. 
Subsequent work refined the constant: in 2004, Chen and Sun \cite{chen2004} obtained the first explicit lower density \(0.0868\), improved in a number of papers (see, e.g.,  \cite{habsieger2006,lu2007,pintz2006}), and in 2018 Elsholtz and Schlage-Puchta \cite{elsholtz2018} established the currently best known lower bound \(0.107648\).

In 1950, Erd\H{o}s \cite{erdos1950} initiated two further directions. 
First, he proved that the number of representations of \(n\) in the form \(p+2^{k}\) is at least \(c\log\log n\) for infinitely many \(n\). 
Second, using the covering congruence systems he introduced for this purpose, he constructed an infinite arithmetic progression of odd integers admitting no representation as \(p+2^{k}\).

Generalizations with additional summands have also been investigated. 
In 1971, Crocker \cite{crocker1971} showed that infinitely many odd integers cannot be written as \(p+2^{m}+2^{n}\), a result considerably strengthened by Pan \cite{pan2011} in 2011. 
In 2014, Yang and Chen  \cite{yang2014} posed the problem of determining whether sums of a prime and several terms of the form \(2^{k^{2}}\) still have positive asymptotic density; in 2022, Ding \cite{ding2022} settled the case of two terms affirmatively. 
In 2024, Chen and Xu \cite{chen2024} established the theorem for the general \(t\)-term case. Define
\[
R_{2}(r_{1},\dots,r_{t})
=\left\{n=p+\sum_{i=1}^{t}2^{\lfloor k_{i}^{r_{i}}\rfloor}:
p\in\mathcal P,\ k_{i}\in\mathbb N\right\}.
\]
They proved that \(R_{2}(r_{1},\dots,r_{t})\) has asymptotic density zero if \(\sum_{i=1}^{t}r_{i}^{-1}<1\), and positive lower asymptotic density if \(\sum_{i=1}^{t}r_{i}^{-1}\ge1\), under the condition that at least one exponent \(r_{i}\) is an integer. 
They also constructed infinite arithmetic progressions of non-representable integers for several exponent tuples; whether such progressions exist for all tuples with \(\sum_{i=1}^{t}r_{i}^{-1}=1\) remains open. 
In 2026, Ding and Zhai \cite{ding2026} removed the integrality condition, obtaining the same conclusion for arbitrary positive real exponents.

In the polynomial setting, the first analogue of Romanoff's theorem was established by Shparlinski and Weingartner \cite{shparlinski2017} in 2017. 
Let \(q\) be a prime power and let \(\mathbb F_{q}\) denote the finite field with \(q\) elements. 
For \(f(x)\in\mathbb F_{q}[x]\), define its norm by \(|f|=q^{\deg f}\), and let \(\mathcal I_{q}\) denote the set of monic irreducible polynomials in \(\mathbb F_{q}[x]\); there are exactly \(q^{n}\) monic polynomials of degree \(n\). 
Set \(\mathcal I_{q}(n)=\{p\in\mathcal I_{q}:|p|=q^{n}\}\).

\begin{theorem}\label{thm:sw} \cite[Theorem 1.1]{shparlinski2017}
Let \(\gamma\) denote Euler's constant and set \(\delta=\deg g\). 
Uniformly for all polynomials \(g\in\mathbb F_{q}[x]\) with \(\delta\ge1\), and all integers \(n\ge1\), \(q\ge2\), we have
\[
\frac{1+\delta/n}{\delta}\ge\frac{R(n,g,q)}{q^{n}}>\frac{(1-2q^{-n/2})^{2}(1+\delta/n)^{-1}}{\delta+8\tfrac{q}{q-1}\left(1+e^{\gamma}\min\{5\sqrt{\delta/q},\ \log 6\delta/\log q\}\right)},
\]
where \(R(n,g,q)\) denotes the number of monic polynomials \(f\in\mathbb F_{q}[x]\) of degree \(n\) that admit a representation \(f=h+g^{k}\) with \(h\in\mathcal I_{q}(n)\) and \(k\ge0\).
\end{theorem}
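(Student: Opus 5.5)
The plan is the classical Romanoff second-moment argument, transported to $\mathbb F_q[x]$. Set $K=\lfloor (n-1)/\delta\rfloor$, so that $g^k$ has degree $\delta k<n$ for $0\le k\le K$. Then $h+g^k$ is monic of degree $n$ whenever $h\in\mathcal I_q(n)$. For a monic $f$ of degree $n$, let
\[
r(f)=\#\{(h,k): f=h+g^k,\ h\in\mathcal I_q(n),\ 0\le k\le K\}.
\]
(The case $\delta k\ge n$ is handled separately.) If $\delta k\ge n$, then $h+g^k$ has degree $n$ only in degenerate cases, which contribute at most one extra value of $k$. This is the source of the factor $1+\delta/n$.

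The upper bound is immediate. $R(n,g,q)$ is at most the number of pairs $(h,k)$, which is at most $|\mathcal I_q(n)|(K+2)$. Using $|\mathcal I_q(n)|\le q^n/n$, this gives at most $q^n(n/\delta+1)/n=q^n(1+\delta/n)/\delta$.

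For the lower bound, apply Cauchy--Schwarz:
\[
R(n,g,q)\ge \frac{\bigl(\sum_f r(f)\bigr)^2}{\sum_f r(f)^2}.
\]
For the first moment, $\sum_f r(f)=(K+1)|\mathcal I_q(n)|$. The prime polynomial theorem gives $|\mathcal I_q(n)|\ge (q^n-2q^{n/2})/n$, which produces the factor $(1-2q^{-n/2})^2$ after squaring.

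For the second moment, expand
\[
\sum_f r(f)^2=\sum_{k,\ell}\#\{(h,h'): h-h'=g^\ell-g^k\}.
\]
The diagonal $k=\ell$ contributes $(K+1)|\mathcal I_q(n)|$. For $k\neq\ell$, set $D=g^k-g^\ell\ne0$, of degree $<n$. Use the function field upper-bound sieve (Selberg or Brun) for twin irreducibles $h$ and $h+D$ with $\deg h=n$. This should give
\[
\ll \frac{q^n}{n^2}\,\frac{q}{q-1}\prod_{p\mid D}\Bigl(1+\frac1{|p|}\Bigr),
\]
with an explicit constant, presumably the $8$ appearing in the statement. Summing over ordered pairs $k<\ell$, write $D=g^k(g^{\ell-k}-1)$. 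The product over $p\mid g$ is bounded by $|g|/\varphi_q(g)$-type quantities, which is where $e^\gamma\min\{\dots\}$ enters via a Mertens-type bound in $\mathbb F_q[x]$. The remaining factor requires bounding
\[
\sum_{m\le K}\sum_{p\mid g^m-1}\frac1{|p|}.
\]
For this I swap the order of summation: $p\mid g^m-1$ forces $\mathrm{ord}_p(g)\mid m$. So the sum is at most $K\sum_{p\nmid g} 1/(|p|\,\mathrm{ord}_p(g))$, and this series converges because the number of $p$ with a given order $d$ satisfies $\sum\deg p\le \delta d$.

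Putting the pieces together, the off-diagonal sum should be at most $(K+1)^2(q^n/n^2)\cdot 8\frac{q}{q-1}(1+e^\gamma\min\{\dots\})$. Dividing then yields the denominator $\delta+8\frac{q}{q-1}(\cdots)$ after multiplying through by $n/(K+1)\approx\delta$.

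The main obstacle is obtaining a fully explicit, uniform sieve constant for twin irreducibles with shift $D$ in $\mathbb F_q[x]$, together with the explicit Mertens/order-sum estimate giving $\min\{5\sqrt{\delta/q},\log 6\delta/\log q\}$. To get these, I would first try Selberg's sieve with level $q^{n/2}$. Its main term involves $\sum_{\deg d\le n/2}\mu^2(d)/\varphi(d)$, which is computable exactly in $\mathbb F_q[x]$. The error terms vanish, since counts in residue classes are exact for moduli of degree $\le n$.
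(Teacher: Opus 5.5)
The paper gives no proof of this statement; it quotes it from Shparlinski--Weingartner. The ingredients it imports from that source (Lemma~\ref{lem:twin} and the second-moment bound (\ref{3.5})) indicate the same Cauchy--Schwarz / twin-irreducible / Romanoff-series scheme you describe. Your skeleton, first moment, diagonal term and use of the twin bound are fine; you can simply quote Lemma~\ref{lem:twin} rather than rebuilding it with Selberg's sieve.

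\textbf{The Romanoff step.} This is the genuine gap, and it is where the whole constant \(1+e^{\gamma}\min\{\cdots\}\) comes from.
\begin{enumerate}[label=(\roman*)]
\item What must be averaged over pairs \(k\ne\ell\) is \(\prod_{p\mid D}(1+1/|p|)=\sum_{d\mid D}\mu^2(d)/|d|\), a sum over \emph{all} squarefree monic \(d\), not the prime sum \(\sum_{p\mid g^m-1}1/|p|\). Since \(\prod(1+x_p)\ge 1+\sum x_p\), an average bound for the prime sum does not control the product.
\item You must instead interchange \(d\) with the pairs. Write \(d=d_1d_2\) with \(d_1\mid g\) and \((d_2,g)=1\). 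Then \(d\mid g^k-g^\ell\) forces \(\operatorname{ord}_{d_2}(g)\mid \ell-k\), so at most \(K(K+1)/\operatorname{ord}_{d_2}(g)\) ordered pairs occur. For composite \(d_2\) this order is an lcm, which a prime-by-prime count does not see.
\item Your convergence argument fails even for primes. From \(\sum_{\operatorname{ord}_p(g)=m}\deg p\le\delta m\) and \(|p|>m\) you only get \(\sum_{\operatorname{ord}_p(g)=m}1/|p|\le\delta/\log_q(m+1)\), and \(\sum_m 1/(m\log m)\) diverges.
\item What works is a cumulative bound, as in the proof of Lemma~\ref{lem:conv}. Every \(d\) with \(\operatorname{ord}_{d_2}(g)\le T\) divides \(g\prod_{m\le T}(g^m-1)\), which has degree \(O(\delta T^2)\). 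Mertens then bounds the cumulative sum of \(\mu^2(d)/|d|\) by \(O(1+\log_q(\delta T^2))\), and Abel summation against \(1/m\) converges.
\end{enumerate}

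\textbf{The explicit constant.} Even granting convergence, the stated inequality needs this sum bounded explicitly and uniformly in \(g\). Your claim that the off-diagonal part ``should be at most'' \((K+1)^2\frac{q^n}{n^2}\,8\frac{q}{q-1}(1+e^{\gamma}\min\{\cdots\})\) is asserted, not derived.

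Your decomposition also points the wrong way. Splitting off \(\prod_{p\mid g}\) as a separate factor of size \(|g|/\varphi_q(g)\) gives a \emph{product} of that factor with \(1+\sum_{d_2\ne1}\mu^2(d_2)/(|d_2|\operatorname{ord}_{d_2}(g))\), not the additive shape \(1+e^{\gamma}\min\{\cdots\}\). Keeping the divisors of \(g\) inside the cumulative count does produce that shape:
\begin{enumerate}[label=(\roman*)]
\item \(d=1\) contributes the \(1\).
\item Every prime has norm at least \(q\), and there are \(O(\delta T^2)\) of them. So the cumulative sum over \(d\ne1\) is at most \((1+1/q)^{O(\delta T^2)}-1\), which is \(O(\delta T^2/q)\) when \(\delta T^2\lesssim q\). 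Partial summation then yields a term of order \(\sqrt{\delta/q}\).
\item An explicit Mertens product yields the \(\log 6\delta/\log q\) term, though pinning down the constants \(5\) and \(6\) still needs careful bookkeeping.
\end{enumerate}

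\textbf{Upper bound.} The step \(K+2\le n/\delta+1\) is false for \(\delta\ge2\) (e.g.\ \(\delta=2\), \(n=3\)). No extra \(k\) exists, however. For \(\delta k\ge n\), \(h+g^k\) is never monic of degree \(n\); when \(\delta k=n\) its \(x^n\)-coefficient is \(1+c^k\ne1\), where \(c\) is the leading coefficient of \(g\). So the correct count uses \(K+1\le(n-1)/\delta+1<(n+\delta)/\delta\), and that rounding is the source of the factor \(1+\delta/n\). Likewise \(K+1\ge n/\delta\) is what gives \(n/(K+1)\le\delta\) in your final step.
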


For further results on Romanoff-type theorems in the polynomial setting, we refer the reader to  \cite{dai2026,dai2027,ding2021,ding2025}.

Motivated by the works above, in this paper we study the four corresponding questions over \(\mathbb F_{q}[x]\): Erd\H{o}s' multiplicity result (Theorem~\ref{thm:many}), non-unique representations (Theorem~\ref{thm:multi}), non-representable residue classes (Theorem~\ref{thm:nonrep}), and the positive-density theorem for real exponents (Theorem~\ref{thm:dens}). 
The precise statements are as follows.

Let \(g\in\mathbb F_{q}[x]\) be a given polynomial of degree \(\delta>0\). 
For a positive integer \(n\) and a monic polynomial \(f\) of degree \(n\), define the representation function
\[
R(f,n)=\left\{(h,k): f=h+g^{k},\ h\in\mathcal I_{q}(n),\ k\in\mathbb N,\ \delta k<n\right\},
\]
which counts the representations of \(f\) as the sum of a monic irreducible polynomial \(h\) of degree \(n\) and a power \(g^{k}\) of degree strictly less than \(n\); in contrast to Theorem~\ref{thm:sw}, the exponent \(k=0\) is excluded.

We record a simple consequence of the definition, used repeatedly below. 
Since \(\delta k<n\) with \(k\ge1\) forces \(n>\delta\), every monic irreducible divisor \(p\) of \(g\) satisfies \(\deg p\le\delta<n\). 
If \(f=h+g^{k}\) with \(h\in\mathcal I_{q}(n)\), \(k\ge1\) and \(\delta k<n\), then \(g^{k}\equiv0\pmod p\), so \(f\equiv h\pmod p\); as \(h\) is irreducible of degree \(n>\deg p\), it is not divisible by \(p\), whence \(p\nmid f\). 
Since \(p\mid g\) was arbitrary, \((f,g)=1\). 
Thus any polynomial admitting such a representation is coprime to \(g\).

Our first result is the function-field analogue of the multiplicity result of Erd\H{o}s recalled above. 
Since the size of a polynomial of degree \(n\) over \(\mathbb F_{q}\) is \(q^{n}\), the bound \(\gg\log n\) below is the function-field analogue of his result.

\begin{theorem}\label{thm:many}
There exists a sequence of monic polynomials \(f_{n}\in\mathbb F_{q}[x]\) with \(\deg f_{n}=n\) such that, for all sufficiently large \(n\),
\[
R(f_{n},n)\gg\log n,
\]
where the implied constant depends only on \(q\) and \(g\). 
In particular, \(R(f_{n},n)\to\infty\) as \(n\to\infty\).
\end{theorem}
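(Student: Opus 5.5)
We follow Erdős' strategy: choose the polynomial \(f_n\) so that many of the shifts \(f_n-g^{k}\) avoid all small irreducible factors simultaneously, and then count irreducibles in a restricted set using the prime polynomial theorem in arithmetic progressions. The approach is a first-moment (averaging) argument, not an explicit construction of \(f_n\).

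\textbf{Setup.} Let \(K=\lfloor (n-1)/\delta\rfloor\), so the admissible exponents are \(1\le k\le K\). Define
\[
S(n)=\sum_{\substack{f\ \text{monic}\\ \deg f=n}} R(f,n)
=\sum_{k=1}^{K}\#\{h\in\mathcal I_q(n)\}.
\]
This equals \(K\,\pi_q(n)\), which is of order \(K q^{n}/n\asymp q^{n}/\delta\). The average of \(R(f,n)\) is therefore only \(\asymp 1/\delta\), so averaging over all \(f\) gives nothing.

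\textbf{Restricting the average.} As in Erdős' argument, we average over a thin residue class instead. Let \(M\) be the product of the monic irreducibles \(p\nmid g\) with \(\deg p\le L\), where \(L\) will be a small multiple of \(\log n\). For each such \(p\), the order \(e_p\) of \(g\) modulo \(p\) divides \(|p|-1\).

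Choose the residue \(f\equiv 0\pmod M\), or more flexibly \(f\equiv a\pmod{M}\). We want \(f-g^{k}\) to be coprime to \(M\) for all \(k\) in a large set \(\mathcal K\). It is cleaner to restrict \(k\) instead: take \(\mathcal K=\{k\le K: k\equiv 0 \pmod{E}\}\), where \(E=\operatorname{lcm}\{e_p:\deg p\le L\}\). Then \(g^{k}\equiv 1\pmod p\) for every \(p\mid M\) and every \(k\in\mathcal K\).

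Now take \(f\equiv 1+u\pmod M\) with \(u\) a unit mod \(M\) and \(u\not\equiv 0\) modulo each \(p\). For instance, \(f\equiv x\) works when \(p\neq x\), and a small adjustment handles \(p=x\) and \(\deg p=1\) with \(q=2\) via CRT. With this choice, \(f-g^{k}\equiv u\) is a unit mod \(M\) for every \(k\in\mathcal K\).

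\textbf{Counting.} Sum \(R(f,n)\) over monic \(f\) of degree \(n\) in this single class mod \(M\); there are \(q^{n}/|M|\) such \(f\) provided \(\deg M<n\). The sum is at least
\[
\sum_{k\in\mathcal K}\#\{h\in\mathcal I_q(n):h\equiv u \pmod M\}.
\]
By the prime polynomial theorem in progressions (Weil/Riemann hypothesis for \(L\)-functions), each count is
\[
\frac{q^{n}}{n\,\varphi_q(M)}+O\!\left(\frac{\deg M\, q^{n/2}}{n}\right).
\]
Hence the average of \(R\) over the class is
\[
\gg |\mathcal K|\,\frac{|M|}{n\,\varphi_q(M)}\gg \frac{K}{E}\cdot\frac{L}{n},
\]
using Mertens' theorem for \(\mathbb F_q[x]\), which gives \(|M|/\varphi_q(M)\asymp L\). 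Since \(K\asymp n\), some \(f_n\) in the class has \(R(f_n,n)\gg L/E\).

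\textbf{Main obstacle.} The difficulty is that \(E\) grows exponentially in \(L\), since \(E\) divides \(\operatorname{lcm}_{d\le L}(q^{d}-1)\). With this crude choice the gain \(L\) is destroyed.

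The Erdős fix is to take \(\mathcal K\) to be all \(k\le K\), and to choose \(f\) randomly mod \(M\) with \(f\equiv 1\) only modulo those \(p\) for which \(e_p\) is small. More precisely, we use only primes \(p\) with \(e_p\mid m\) for a fixed highly composite modulus \(m\), say \(m=\operatorname{lcm}(1,\dots,T)\) with \(T\) a small multiple of \(\log\log n\). We then restrict to \(k\equiv 0\pmod m\) and exploit the fact that the number of \(p\) with \(p\mid g^{m}-1\) is large. Indeed, \(g^{m}-1\) has \(\gg\) many small irreducible factors: it contains all \(p\) with \(\deg p\mid d\) for \(d\le T'\), because every \(p\nmid g\) of degree \(d\) has \(e_p\mid q^{d}-1\). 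Choosing \(m=\operatorname{lcm}_{d\le D}(q^{d}-1)\) with \(D\asymp\log n\) keeps \(m\le q^{O(D^{2})}\), which is too large.

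So the balancing I would attempt is \(m=\prod_{d\le D}(q^{d}-1)\), giving \(\log m\asymp D^{2}\). This costs \(K/m\) exponents, which is acceptable if \(m\le n^{1/2}\), i.e. \(D\asymp\sqrt{\log n}\). The gain is \(\prod_{\deg p\le D}(1-|p|^{-1})^{-1}\asymp D\).

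This heuristic gives only \(\sqrt{\log n}\) relative to an average of size \(n/m\), which is large. But the real content is that the loss \(1/m\) is compensated: the count over the class is \(|\mathcal K|\cdot D/n\asymp (n/m)(D/n)\), which is tiny. I therefore expect the correct bookkeeping to mirror Erdős' use of the exact identity \(\sum_k\) over all \(k\) together with a Cauchy–Schwarz second-moment bound. Concretely, I would show
\[
\sum_f R(f,n)^2\ll \frac{q^{n}}{\delta^2}\cdot(\text{bounded}),
\]
via sieve bounds on pairs \(h,\ h+g^{k}-g^{k'}\) both irreducible, and then pigeonhole within a class where the mean is \(\gg\log n\). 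Controlling the exponential size of orders \(e_p\) against the \(\log n\) gain is the step I expect to be hardest.
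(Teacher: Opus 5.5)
\textbf{There is a genuine gap.} Your proposal never reaches a proof. It stalls on an obstacle of its own making, namely the size of $E=\operatorname{lcm}\{e_p\}$, and then replaces the argument with a second-moment plan whose key residue class is never identified.

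\textbf{The missing observation.} You almost wrote it down yourself. Take exactly the class $f\equiv 0\pmod M$, where $M$ is the product of the monic irreducibles $p\nmid g$ with $\deg p\le L$. Then $f-g^k\equiv -g^k\pmod M$, and $-g^k$ is a unit modulo $M$ for \emph{every} $k$, because no prime factor of $M$ divides $g$. So $f-g^k$ need not lie in one fixed unit class, $k$ need not be restricted to multiples of $E$, and the orders $e_p$ never enter.

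\textbf{How the count then goes.} Sum over all $n/\delta+O(1)$ admissible $k$, and apply the Weil-type bound in progressions (Wan's version, uniform in $\deg M$) to each class $-g^k \bmod M$. This gives
\[
\sum_{\substack{\deg f=n\\ M\mid f}}R(f,n)=\frac{q^n}{\delta\,\varphi_q(M)}+O\!\left(\frac{q^n}{n\,\varphi_q(M)}\right)+O\!\left(\deg M\,q^{n/2}\right).
\]
Divide by the $q^{n-\deg M}$ monic $f$ of degree $n$ in this class. The average is $\delta^{-1}|M|/\varphi_q(M)+o(1)$, provided $\deg M\,q^{\deg M-n/2}=o(1)$.

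Since $\deg M\asymp q^L$, the choice $L=\lfloor\tfrac12\log_q n\rfloor$ gives $\deg M\asymp\sqrt n$. Mertens' theorem then gives $|M|/\varphi_q(M)\asymp L\asymp\log n$. Pigeonhole on this class produces $f_n$ with $R(f_n,n)\gg\log n$. This is exactly the paper's proof.

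Your \emph{Counting} paragraph already contained the right quantity $|\mathcal K|\,|M|/(n\,\varphi_q(M))$. With $\mathcal K$ taken to be all admissible $k$, which is legitimate once $f\equiv0\pmod M$, it equals $\asymp L/\delta$ directly.

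\textbf{The second-moment plan is beside the point.} An estimate for $\sum_f R(f,n)^2$, combined with Cauchy--Schwarz, bounds from below how many $f$ are represented. It says nothing about how large $R$ can be at a single $f$. For a lower bound on the maximum, the first moment over a well-chosen class plus pigeonhole is all that is needed. The ``balancing'' heuristics with $m=\prod_{d\le D}(q^d-1)$ are likewise unnecessary.
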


Whereas Theorem~\ref{thm:sw} shows that a positive proportion of monic polynomials of degree \(n\) admit at least one representation, one may ask how many admit more than one. 
In the integer setting, Elsholtz and Schlage-Puchta \cite{elsholtz2018} proved that a positive proportion of integers admit exactly one representation as \(p+2^{k}\); the following theorem addresses the analogue in the opposite direction. 
As observed above, any polynomial admitting a representation is coprime to \(g\); for \(n\ge\delta\), the proportion of monic polynomials of degree \(n\) coprime to \(g\) is
\[
\rho_{g}=\prod_{p\mid g}\left(1-\frac1{|p|}\right)
=\frac{\varphi_{q}(g)}{|g|},
\]
where \(\varphi_q\) denotes the Euler function of \(\mathbb F_q[x]\), defined by \(\varphi_q(g)=|(\mathbb F_q[x]/(g))^\times|\), and the product runs over the monic irreducible divisors \(p\) of \(g\). 
Let \(\mathcal{S}_{n}\) be the set of monic polynomials \(f\) of degree \(n\) with \(R(f,n)>1\), and denote its cardinality by \(S(n)\).

\begin{theorem}\label{thm:multi}
If \(\delta\rho_{g}<1\), then \(S(n)\gg q^{n}\) for all sufficiently large \(n\), where the implied constant depends only on \(q\) and \(g\).
\end{theorem}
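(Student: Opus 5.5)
The argument is a pigeonhole count. Let \(K=\lfloor (n-1)/\delta\rfloor\), so that \(k\in\{1,\dots,K\}\) ranges over exactly the admissible exponents with \(\delta k<n\). Consider the map \((h,k)\mapsto h+g^{k}\) on \(\mathcal I_q(n)\times\{1,\dots,K\}\). Since \(\deg g^{k}<n\), every image is monic of degree \(n\), and by the observation above every image is coprime to \(g\). Summing the representation function gives
\[
\sum_{f}R(f,n)=K\,|\mathcal I_q(n)|,
\]
where \(f\) runs over monic polynomials of degree \(n\) coprime to \(g\). There are \(\rho_g q^{n}\) such \(f\) (for \(n\ge\delta\)). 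By the prime polynomial theorem, \(|\mathcal I_q(n)|=q^{n}/n+O(q^{n/2}/n)\), and \(K=n/\delta+O(1)\). Hence
\[
\sum_{f}R(f,n)=\frac{q^{n}}{\delta}\bigl(1+O(1/n)\bigr).
\]

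Next, split the sum according to whether \(R(f,n)\le1\) or \(R(f,n)\ge2\):
\[
\sum_f R(f,n)\le \#\{f:R(f,n)=1\}+\sum_{f\in\mathcal S_n}R(f,n)\le \rho_g q^{n}+\sum_{f\in\mathcal S_n}R(f,n).
\]
Under the hypothesis \(\delta\rho_g<1\), we have \(1/\delta-\rho_g=:\eta>0\). It follows that
\[
\sum_{f\in\mathcal S_n}R(f,n)\ge \bigl(\eta+O(1/n)\bigr)q^{n}\ge \tfrac{\eta}{2}q^{n}
\]
for large \(n\). This is the point where the hypothesis enters: it says exactly that the average number of representations over the admissible residue set exceeds one.

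The main obstacle is converting this lower bound on \(\sum_{f\in\mathcal S_n}R(f,n)\) into a lower bound on \(S(n)=|\mathcal S_n|\). For that we need \(R(f,n)\) not to be concentrated on a tiny set. I would use Cauchy--Schwarz:
\[
\Bigl(\sum_{f\in\mathcal S_n}R(f,n)\Bigr)^{2}\le S(n)\sum_f R(f,n)^{2},
\]
which reduces the problem to showing \(\sum_f R(f,n)^2\ll q^{n}\). Expanding the square, the left side counts quadruples \((h_1,k_1,h_2,k_2)\) with \(h_1+g^{k_1}=h_2+g^{k_2}\). The diagonal \(k_1=k_2\) contributes \(K|\mathcal I_q(n)|\ll q^n\). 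For \(k_1\ne k_2\), write \(d=g^{k_2}-g^{k_1}\), a nonzero polynomial of degree \(<n\). Then we must count \(h\in\mathcal I_q(n)\) such that \(h+d\) is also irreducible. An upper-bound sieve in \(\mathbb F_q[x]\) (Selberg or Brun, in the twin-prime form) gives
\[
\#\{h: h,\ h+d\in\mathcal I_q(n)\}\ll \frac{q^{n}}{n^{2}}\prod_{p\mid d}\Bigl(1-\frac1{|p|}\Bigr)^{-1}.
\]
Summing over the \(O(n^2/\delta^2)\) pairs \((k_1,k_2)\), it remains to show that
\[
\sum_{1\le k_1<k_2\le K}\frac{|d|}{\varphi_q(d)}\ll K^{2}.
\]
Here \(d=g^{k_1}(g^{k_2-k_1}-1)\). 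The factor \(g^{k_1}\) contributes only primes dividing \(g\), which is a bounded amount. The factor \(g^{j}-1\), with \(j=k_2-k_1\), is handled as in Romanoff's argument. Write \(|d|/\varphi_q(d)\ll\sum_{m\mid d}\mu^2(m)/\varphi_q(m)\), swap the order of summation, and note that \(m\mid g^{j}-1\) forces \(j\equiv0\) modulo the order \(e(m)\) of \(g\) in \((\mathbb F_q[x]/(m))^\times\) (for \((m,g)=1\)). This gives the bound \(K^2\sum_m \mu^2(m)/(\varphi_q(m)e(m))\). The final step is the convergence of \(\sum_m 1/(\varphi_q(m)e(m))\), the function-field Romanoff lemma. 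Its proof is the usual one: since \(m\mid g^{e}-1\) forces \(\deg m\le e\delta\), the polynomials \(m\) with \(e(m)=e\) all divide \(\prod_{j\le e}(g^{j}-1)\), and counting them as in the integer case gives convergence. Combining these estimates yields \(\sum_f R(f,n)^2\ll q^n\), and Cauchy--Schwarz then gives \(S(n)\gg q^{n}\).
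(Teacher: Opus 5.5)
Your proof is correct and follows the paper's argument: the same first-moment count $\sum_f R(f,n)=q^n/\delta+O(q^n/n)$, the same use of coprimality to $g$ to bound the number of $f$ with $R(f,n)\ge1$ by $\rho_g q^n$, and Cauchy--Schwarz against $\sum_f R(f,n)^2\ll q^n$. The only difference is that the paper cites this second-moment bound from Shparlinski--Weingartner (using $R(f,n)\le C(f,n)$), whereas you re-derive it. Your derivation uses the twin-irreducible bound of Lemma~\ref{lem:twin} and the convergence of $\sum_m \mu^2(m)/(\varphi_q(m)e(m))$; the latter follows from the argument of Lemma~\ref{lem:conv}, since that argument actually bounds $\prod_{p\mid h}(1-1/|p|)^{-1}=\sum_{m\mid h}\mu^2(m)/\varphi_q(m)$, so your derivation is sound.
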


For positive real numbers \(r_{1},\dots,r_{t}\), let \(\mathcal{R}_{n}(g,q;r_{1},\dots,r_{t})\) denote the set of monic polynomials \(f\in\mathbb F_{q}[x]\) of degree \(n\) that admit a representation of the form
\[
f=h+g^{\lfloor k_{1}^{r_{1}}\rfloor}+\cdots+g^{\lfloor k_{t}^{r_{t}}\rfloor},
\]
where \(h\in\mathcal I_{q}(n)\) and \(k_{1},\dots,k_{t}\in\mathbb N\) satisfy \(\delta\lfloor k_{i}^{r_{i}}\rfloor<n\) for each \(i\), and set
\[
R_{n}(g,q;r_{1},\dots,r_{t})=|\mathcal{R}_{n}(g,q;r_{1},\dots,r_{t})|.
\]

The following theorem is the polynomial counterpart of the non-representability results recalled above. 
It holds for every exponent tuple, and, in contrast to the integer situation, where covering congruence systems are needed, a single residue class modulo an irreducible divisor of \(g-1\) suffices. 
In particular, whenever \(\sum_{i=1}^{t}r_i^{-1}\ge 1\), the non-representable polynomials already occupy a positive proportion of the monic polynomials of degree \(n\), so that both representable and non-representable polynomials have positive density.

\begin{theorem}\label{thm:nonrep}
Let \(r_{1},\dots,r_{t}>0\). 
Then there exists a monic irreducible polynomial \(p\in\mathbb F_{q}[x]\) such that for every \(n>\deg p\), if \(f\) is a monic polynomial of degree \(n\) satisfying \(f\equiv t\pmod p\), then \(f\notin \mathcal{R}_{n}(g,q;r_{1},\dots,r_{t})\).
\end{theorem}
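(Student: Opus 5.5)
The plan is to take \(p\) to be any monic irreducible divisor of \(g-1\). Since \(\deg g=\delta\ge 1\), the polynomial \(g-1\) has degree \(\delta\ge1\), so it is nonconstant. It therefore has at least one irreducible factor, and after scaling by a unit we may take that factor monic. The residue \(t\) in the statement is understood as the constant polynomial \(t\cdot 1\in\mathbb F_q\subset\mathbb F_q[x]\), reduced modulo the characteristic. The argument below does not care whether this constant is zero.

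The key observation is that \(g\equiv 1\pmod p\). Hence \(g^{m}\equiv 1\pmod p\) for every integer \(m\ge 0\). This includes \(m=\lfloor k_i^{r_i}\rfloor=0\), which can occur when \(k_i^{r_i}<1\); there \(g^0=1\) trivially. Consequently, for any admissible \(k_1,\dots,k_t\),
\[
\sum_{i=1}^{t} g^{\lfloor k_i^{r_i}\rfloor}\equiv t\pmod p .
\]

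Now suppose \(n>\deg p\) and let \(f\) be monic of degree \(n\) with \(f\equiv t\pmod p\). Suppose also that \(f\in\mathcal R_n(g,q;r_1,\dots,r_t)\), so \(f=h+\sum_i g^{\lfloor k_i^{r_i}\rfloor}\) with \(h\in\mathcal I_q(n)\). Reducing modulo \(p\) gives \(h\equiv f-t\equiv 0\pmod p\), so \(p\mid h\). But \(h\) is irreducible of degree \(n>\deg p\ge1\), so its only monic divisors are \(1\) and \(h\), and \(p\) is neither. This contradiction shows that no such representation exists. The hypothesis \(n>\deg p\) is used exactly here, to rule out \(h=p\). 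The degree constraints \(\delta\lfloor k_i^{r_i}\rfloor<n\) play no role.

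There is no real obstacle. The only points needing care are the degenerate ones noted above: exponents equal to \(0\), the possibility that \(t\equiv 0\) in \(\mathbb F_q\), and the nonconstancy of \(g-1\), which guarantees that \(p\) exists. The proof works for arbitrary positive real \(r_i\), since it uses no property of the exponents beyond their being nonnegative integers.
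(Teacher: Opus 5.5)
Your proof is correct and is essentially the paper's argument: choose a monic irreducible \(p\mid g-1\), observe that every admissible power sum is \(\equiv t\pmod p\), and conclude that \(p\) is a proper nonconstant divisor of \(h=f-a\), which is impossible for an irreducible \(h\) of degree \(n>\deg p\). One small remark: since \(k_i\ge1\) and \(r_i>0\), we always have \(\lfloor k_i^{r_i}\rfloor\ge1\), so the exponent-zero case you guard against never arises, though covering it does no harm.
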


Finally, combining the Romanoff-type machinery with the polynomial structure of \(\mathbb F_{q}[x]\), we prove the function-field analogue of the density theorem of Chen--Xu and Ding--Zhai described above.

\begin{theorem}\label{thm:dens}
Let \(r_{1},\dots,r_{t}\) be positive real numbers satisfying \(r_{1}^{-1}+\cdots+r_{t}^{-1}\ge1\). Then \(R_{n}(g,q;r_{1},\dots,r_{t})\gg q^{n}\) for all sufficiently large integers \(n\), where the implied constant depends only on \(q\), \(g\), and \(r_{1},\dots,r_{t}\).
\end{theorem}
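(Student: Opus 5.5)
The plan is to run the standard Romanoff second-moment argument, in the form of the Cauchy--Schwarz inequality, over a carefully chosen finite set of exponent tuples. Fix \(n\) large. For each \(i\), let \(\mathcal A_i\) be the set of distinct values \(a=\lfloor k^{r_i}\rfloor\) with \(k\in\mathbb N\) and \(a\) in a prescribed window \(W_i\subset[1,n/\delta)\). The windows \(W_1,\dots,W_t\) are pairwise disjoint intervals, each of length \(\asymp n/(t\delta)\). Then \(|\mathcal A_i|\asymp\min\{n,n^{1/r_i}\}\), with constants depending on \(r_i,t,\delta\).

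For \(\mathbf a=(a_1,\dots,a_t)\in\mathcal A=\prod_i\mathcal A_i\), put \(s(\mathbf a)=\sum_i g^{a_i}\). The exponents \(a_i\) are distinct, and uniqueness of \(g\)-adic expansions in \(\mathbb F_q[x]\) shows that \(\mathbf a\mapsto s(\mathbf a)\) is injective. Each \(s(\mathbf a)\) has degree \(<n\). Set \(N=|\mathcal A|\). Then
\[
N\gg\prod_i\min\{n,n^{1/r_i}\}\gg n,
\]
because either some \(r_i\le1\), or \(\prod_i n^{1/r_i}=n^{\sum r_i^{-1}}\ge n\). For a monic \(f\) of degree \(n\), let \(r(f)=\#\{(h,\mathbf a)\in\mathcal I_q(n)\times\mathcal A: f=h+s(\mathbf a)\}\). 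Cauchy--Schwarz gives
\[
R_n(g,q;r_1,\dots,r_t)\ \ge\ \frac{\bigl(\sum_f r(f)\bigr)^2}{\sum_f r(f)^2},\qquad \sum_f r(f)=N|\mathcal I_q(n)|\asymp \frac{Nq^n}{n}.
\]
It therefore suffices to show \(\sum_f r(f)^2\ll N^2q^n/n^2\).

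The diagonal terms \(\mathbf a=\mathbf a'\) contribute \(N|\mathcal I_q(n)|\ll Nq^n/n\). This is \(\ll N^2q^n/n^2\) precisely because \(N\gg n\), and this is the only place where the hypothesis \(\sum r_i^{-1}\ge1\) is used. For \(\mathbf a\ne\mathbf a'\), put \(D=s(\mathbf a')-s(\mathbf a)\ne0\), which has degree \(<n\). I will use the standard Selberg (or large) sieve upper bound in \(\mathbb F_q[x]\) for pairs of irreducibles of degree \(n\) with fixed difference \(D\):
\[
\#\{h,h'\in\mathcal I_q(n):h-h'=D\}\ll \frac{q^n}{n^2}\prod_{p\mid D}\Bigl(1-\frac1{|p|}\Bigr)^{-1}.
\]

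Next I reduce the product to small primes. \(D\) has at most \(n/m\) prime divisors of degree \(\ge m\), and there are at most \(q^m\) primes of each degree. Taking \(m=\lceil 2\log_q n\rceil\), the primes of degree \(\ge m\) contribute a factor \(O(1)\). Then I expand
\[
\prod_{p\mid D,\ \deg p<m}(1-|p|^{-1})^{-1}\ll\sum_{d\mid D}\frac{\mu^2(d)c^{\omega(d)}}{|d|},
\]
where \(d\) runs over squarefree products of primes of degree \(<m\). The finitely many primes dividing \(g\) contribute only a bounded factor, so I may assume \((d,g)=1\).

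The main step, which I expect to be the real obstacle, is the Romanoff-type counting of pairs \(\mathbf a\ne\mathbf a'\) with \(d\mid s(\mathbf a)-s(\mathbf a')\). Let \(e_d\) be the order of \(g\) modulo \(d\), and let \(j\) be a coordinate with \(A:=|\mathcal A_j|\gg n^{\eta}\) for some \(\eta>0\) depending on the \(r_i\). Fix all coordinates of \(\mathbf a,\mathbf a'\) except the \(j\)-th, and also fix \(a_j'\). The congruence then forces \(g^{a_j}\) into a single residue class modulo \(d\), so \(a_j\) lies in one class modulo \(e_d\). This leaves \(\ll A/e_d+1\) choices of \(a_j\), and the number of such pairs is \(\ll N^2(1/e_d+1/A)\).

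Summing over \(d\), the theorem follows from two bounds:
\[
\sum_{d}\frac{\mu^2(d)c^{\omega(d)}}{|d|e_d}<\infty
\quad\text{and}\quad
\frac1A\sum_{\substack{d:\,p\mid d\Rightarrow\deg p<m}}\frac{\mu^2(d)c^{\omega(d)}}{|d|}\ll\frac{(\log n)^{O(1)}}{n^{\eta}}=o(1).
\]
For the first, \(e_d\ge\deg d/\delta\), and the number of \(d\) with \(e_d=e\) is controlled by counting the divisors of \(g^e-1\); I would prove this as the \(\mathbb F_q[x]\)-analogue of Romanoff's lemma. The second bound holds because the sum is at most \(\prod_{\deg p<m}(1+c/|p|)\ll m^{c}\).

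Together these give \(\sum_f r(f)^2\ll N^2q^n/n^2\), and hence \(R_n(g,q;r_1,\dots,r_t)\gg q^n\).
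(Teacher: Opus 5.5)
Your architecture is the paper's: Cauchy--Schwarz, the sieve bound for irreducible pairs with fixed difference $D$, discarding prime factors of large degree, expanding into squarefree $d$ coprime to $g$, then fixing all coordinates but one and passing to a congruence modulo $e_d=\operatorname{ord}_d(g)$. The disjoint windows are a clean way to make $\mathbf a\mapsto s(\mathbf a)$ injective.

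\textbf{The gap.} It is exactly the step you flag as the obstacle: the claim that once $a_j$ is confined to one class modulo $e_d$, there remain $\ll A/e_d+1$ choices. That is the count for an interval of consecutive integers, so it holds only when $r_j\le 1$. In that case your argument does go through, mirroring the paper's reduction to Theorem~\ref{thm:sw}.

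In the main case, all $r_i>1$, the set $\mathcal A_j=\{\lfloor k^{r_j}\rfloor\}$ is sparse and is not equidistributed modulo $e_d$ with error $O(1)$. For $r_j=2$, the number of admissible $k$ with $k^2\equiv\ell\pmod{e}$ is roughly $A\rho(e,\ell)/e$ when $e$ is small compared with $A$. Here $\rho(e,\ell)$ is the number of square roots of $\ell$ modulo $e$. It equals $\sqrt e$ for $\ell=0$ and $e$ a square, and can reach $2^{\omega(e)}$ for squarefree $e$. Such classes do occur, e.g.\ $\ell\equiv a_j'$, itself a square, when all other coordinates of $\mathbf a,\mathbf a'$ agree. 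For non-integer $r_j$ there is no elementary equidistribution statement at all. Hence your reduction to $\sum_d\mu^2(d)c^{\omega(d)}/(|d|e_d)<\infty$ is unjustified.

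\textbf{What is needed instead.} You need Lemma~\ref{lem:cong}: $N(K,\ell,m)\ll K/m+K^{1-\eta}m^{-\alpha}+m^{1/r}$ for non-integer $r$ (van der Corput with an exponent pair), or $\ll Km^{-1/r}+m^{\varepsilon}$ for integer $r$. Inserting it changes the bookkeeping in two ways.
\begin{itemize}
\item The term $K^{1-\eta}m^{-\alpha}$ (respectively $Km^{-1/r}$) needs the Erd\H{o}s--Tur\'an-strength convergence $\sum_{(d,g)=1}\mu^2(d)/(|d|e_d^{\alpha})<\infty$ of Lemma~\ref{lem:conv} with a small exponent, not just the exponent-one series.
\item The term $e_d^{1/r}$ is not uniformly $o(A)$, so the paper splits at $M_0\asymp n^{1-1/r}\log n$. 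For $e_d>M_0$, a gap argument gives at most $n^{1/r}/\log n$ solutions: consecutive admissible $k'<k''$ in one class force $e_d\le\lfloor k''^{r}\rfloor-\lfloor k'^{r}\rfloor\ll n^{1-1/r}(k''-k')$. This absorbs the factor $\log n$ from $\sum_{P^+(d)\le n}\mu^2(d)/|d|$; take $c=1$, using $(1-|p|^{-1})^{-1}\le(1+|p|^{-1})(1-|p|^{-2})^{-1}$, so the weight is only $\ll\log n$. For $e_d\le M_0$ one has $e_d^{1/r}\log n=o(n^{1/r})$.
\end{itemize}

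The gap argument alone, giving at most $1+n/(\delta e_d)$ solutions, would not suffice. The $d$ with $e_d\le n^{1-1/r}$ carry unbounded total weight $\sum\mu^2(d)/|d|$. Indeed, every squarefree $d$ coprime to $g$ built from primes of degree $\le j$ has $e_d\mid\operatorname{lcm}_{i\le j}(q^i-1)\le q^{j(j+1)/2}$, and $j\asymp\sqrt{\log n}$ is allowed. So genuine equidistribution of $\lfloor k^{r}\rfloor$ for small moduli is indispensable.
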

\section{Notation and Preliminary Lemmas}
In this section we collect the auxiliary results needed in the proofs of the main theorems.

We first require two standard estimates on irreducible polynomials: the classical count of monic irreducibles of degree \(n\), and an upper bound for twin irreducible pairs \(h,\,h+f\in\mathcal I_q(n)\) with a fixed \(f\) of degree less than \(n\). 
Both are polynomial analogues of classical theorems over the integers; the latter is due to Pollack \cite{pollack2008}, in the form presented by Shparlinski and Weingartner \cite{shparlinski2017}. We record them as follows.

\begin{lemma}\label{lem:irred-count}\cite[Equation (2.1)]{shparlinski2017}
For \(n\ge1\), we have
\[
\frac{q^n}{n}-\frac{2q^{n/2}}{n}<|\mathcal I_q(n)|\le\frac{q^n}{n}.
\]
\end{lemma}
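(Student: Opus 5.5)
The plan is to derive both bounds from the Gauss formula obtained by Möbius inversion. Every element of \(\mathbb F_{q^n}\) is a root of a unique monic irreducible polynomial whose degree divides \(n\), and each such irreducible of degree \(d\) has exactly \(d\) distinct roots there. Counting the \(q^n\) elements of \(\mathbb F_{q^n}\) accordingly gives
\[
\sum_{d\mid n} d\,|\mathcal I_q(d)| = q^n .
\]
The upper bound is then immediate: all terms are nonnegative, so \(n|\mathcal I_q(n)|\le q^n\).

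For the lower bound we isolate the term \(d=n\):
\[
n|\mathcal I_q(n)| = q^n-\sum_{d\mid n,\ d<n} d\,|\mathcal I_q(d)|.
\]
Applying the identity to each \(d\) gives \(d|\mathcal I_q(d)|\le q^d\). Every proper divisor satisfies \(d\le n/2\), so the subtracted sum is at most
\[
\sum_{d\le n/2} q^d=\frac{q^{\lfloor n/2\rfloor+1}-q}{q-1}.
\]
This is bounded by \(\frac{q}{q-1}\,q^{n/2}-\frac{q}{q-1}\le 2q^{n/2}\), and for \(n\ge2\) the inequality is strict because the subtracted term \(\frac{q}{q-1}\) is positive. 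Dividing by \(n\) yields \(|\mathcal I_q(n)|> q^n/n-2q^{n/2}/n\).

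The only delicate point is strictness in small cases. When \(n=1\) there are no proper divisors, so \(|\mathcal I_q(1)|=q\), and \(q>q-2q^{1/2}\) holds trivially. For \(n\ge2\) the divisor \(d=1\) contributes exactly \(q\), while the estimate above sums over all \(d\le n/2\) and so dominates the actual sum. Strictness is therefore guaranteed by the positive slack \(\frac{q}{q-1}\) in the geometric-series bound.
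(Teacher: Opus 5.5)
Your argument is correct. The identity \(\sum_{d\mid n} d\,|\mathcal I_q(d)| = q^n\) gives the upper bound at once, and bounding the proper-divisor contribution by \(\sum_{d\le n/2}q^d\le \frac{q}{q-1}\bigl(q^{n/2}-1\bigr)<2q^{n/2}\) gives the strict lower bound for every \(n\ge1\), so your separate treatment of \(n=1\) is not actually needed. The paper does not prove this lemma; it quotes it from Shparlinski--Weingartner, and your Gauss-formula derivation is the standard argument behind that citation.
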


\begin{lemma}\label{lem:twin}\cite[Lemma 3.6]{shparlinski2017}
For any nonzero polynomial \(f(x)\in\mathbb F_q[x]\) and any integer
\(n>\deg f\),
\[
\left|\{h\in\mathcal I_q(n): h(x)+f(x)\in\mathcal I_q(n)\}\right|
\le\frac{8q^{n+1}}{n^2(q-1)}
\prod_{\substack{p\in\mathcal I_q\\p\mid f}}\left(1+\frac1{|p|}\right).
\]
\end{lemma}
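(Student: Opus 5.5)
The plan is to prove Lemma~\ref{lem:twin} with the Selberg upper-bound sieve in \(\mathbb F_q[x]\). The polynomial setting makes the remainder terms vanish exactly. Let \(\mathcal A\) be the set of monic \(h\) of degree \(n\). For \(h\in\mathcal A\), put \(a_h=h(h+f)\). Every \(h\in\mathcal I_q(n)\) with \(h+f\in\mathcal I_q(n)\) is monic of degree \(n\). Neither \(h\) nor \(h+f\) then has an irreducible factor of degree \(\le m\) once \(m<n\). We will take \(m=\lfloor n/2\rfloor\). So the count in the lemma is at most the number of \(h\in\mathcal A\) with \(a_h\) coprime to every monic irreducible \(p\) with \(\deg p\le m\).

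\emph{Local counts.} Let \(d\) be squarefree with \(\deg d\le n\). The monic \(h\) of degree \(n\) lying in a fixed residue class modulo \(d\) number exactly \(q^{n-\deg d}\). Hence, by the Chinese remainder theorem,
\[
\#\{h\in\mathcal A: d\mid h(h+f)\}=q^{n}\,\frac{\omega(d)}{|d|},
\]
with no error term. Here \(\omega\) is multiplicative, with \(\omega(p)=2\) for \(p\nmid f\) and \(\omega(p)=1\) for \(p\mid f\). In Selberg's \(\Lambda^2\) method we use weights \(\lambda_d\) supported on \(\deg d\le m\). Every \(\operatorname{lcm}[d_1,d_2]\) then has degree \(\le 2m\le n\), so the identity above applies to each term of the quadratic form, and the remainder vanishes identically. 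Optimizing the quadratic form gives the bound \(q^n/G(m)\), where
\[
G(m)=\sum_{\substack{\deg d\le m\\ d\ \text{squarefree, monic}}}\prod_{p\mid d}\frac{\omega(p)}{|p|-\omega(p)} .
\]

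\emph{Lower bound for \(G(m)\).} This is the main step. Write \(\omega(p)/(|p|-\omega(p))=\sum_{j\ge1}\omega(p)^j/|p|^j\) and expand. For the part coprime to \(f\), this gives \(G(m)\) at least a sum over all monic \(d\) (not only squarefree) of degree \(\le m\) of \(\tau(d)/|d|\). The primes \(p\mid f\) contribute only \(1\) instead of \(2\). This loses a factor; we recover it by the standard trick of factoring \(d=d_1d_2\) with \(d_1\) composed of primes dividing \(f\). That yields
\[
G(m)\ \ge\ \prod_{p\mid f}\Bigl(1+\frac1{|p|}\Bigr)^{-1}\sum_{\deg d\le m}\frac{\tau(d)}{|d|}
\]
(possibly after a short Rankin-type comparison). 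Then we use the exact identity \(\sum_{\deg d=k}\tau(d)=(k+1)q^k\), obtained from \(Z(u)^2=(1-qu)^{-2}\). It gives
\[
\sum_{\deg d\le m}\frac{\tau(d)}{|d|}=\frac{(m+1)(m+2)}2\ \ge\ \frac{n^2}{8}.
\]

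\emph{Combining.} The two bounds give \(8q^n n^{-2}\prod_{p\mid f}(1+1/|p|)\). The extra factor \(q/(q-1)\) in the statement gives slack. I would use it to absorb losses from the floor in \(m\) and from the comparison for primes dividing \(f\). If necessary, it also covers the parity of \(n\), by sieving \(h\) only up to degree \(\lfloor (n-1)/2\rfloor\) and paying a factor of at most \(q/(q-1)\) from summing a geometric tail.

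The delicate point is the bookkeeping in the lower bound for \(G(m)\) when \(f\) has small irreducible factors, so that exactly the factor \(\prod_{p\mid f}(1+1/|p|)\) emerges. The first thing I would try is to restrict the sum in \(G(m)\) to \(d\) coprime to \(f\). We then compare \(\sum_{\deg d\le m,\,(d,f)=1}\tau(d)/|d|\) with the full sum via multiplication by \(\prod_{p\mid f}(1-1/|p|)^{-2}\)-type factors, and check that the loss is covered by \(\prod_{p\mid f}(1+1/|p|)\) together with the \(q/(q-1)\) slack.
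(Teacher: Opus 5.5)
The paper does not prove this lemma. It quotes it from Shparlinski--Weingartner (after Pollack), so the comparison is with the standard Selberg-sieve argument, which is the route you take. Your framework is sound. With $m=\lfloor n/2\rfloor$ the local counts are exact, the remainders vanish, and the count is at most $q^n/G(m)$. Since $m+1,\,m+2>n/2$, you get $(m+1)(m+2)/2>n^2/8$ for either parity, so the floor and the parity of $n$ cost nothing. One degenerate case needs a sentence: if $q=2$ and $x\nmid f$ or $x+1\nmid f$, then $\omega(p)=|p|$ for that linear $p$ and $G$ is undefined. In that case one of $h,\,h+f$ is divisible by $p$, so the count is $0$ for $n\ge2$.

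\textbf{The gap.} It is in the lower bound for $G(m)$, which is exactly where the shape $\frac{q}{q-1}\prod_{p\mid f}(1+|p|^{-1})$ has to come from. Your displayed inequality $G(m)\ge\prod_{p\mid f}(1+|p|^{-1})^{-1}\sum_{\deg d\le m}\tau(d)/|d|$ does not follow from the $d=d_1d_2$ trick, and it is false in general. For fixed $f$ and $m\to\infty$ one has $G(m)\sim E\,m^2/2$, where
\[
E=\prod_{p\mid f}\Bigl(1-\frac1{|p|}\Bigr)\prod_{p\nmid f}\frac{(|p|-1)^2}{|p|(|p|-2)} .
\]
For $q=3$ and $f=x^3-x$ this gives $E\prod_{p\mid f}(1+|p|^{-1})\approx 0.75<1$. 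Your fallback of restricting to $(d,f)=1$ is worse. It loses $\prod_{p\mid f}(1-|p|^{-1})^{-2}$, which exceeds $\frac{q}{q-1}\prod_{p\mid f}(1+|p|^{-1})$ by the unbounded factor $\frac{q-1}{q}\prod_{p\mid f}(1-|p|^{-1})^{-1}(1-|p|^{-2})^{-1}$. Moreover, when $f$ is divisible by all primes of small degree and $n$ is large, essentially none of the factor $q/(q-1)$ is left over. So it cannot also be spent on floors, parity, or geometric tails.

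\textbf{The fix.} Write $e=e_1e_2$, with $e_1$ composed of primes dividing $f$ and $(e_2,f)=1$. Expand $\omega(p)/(|p|-\omega(p))=\sum_{j\ge1}(\omega(p)/|p|)^j$ and use $2^j\ge j+1$. This gives $G(m)\ge H(m):=\sum_{\deg e\le m}\tau(e_2)/|e|$, where $H$ is nondecreasing and vanishes at negative arguments. Now use $\tau(e)=\sum_{c\mid e_1}\tau(e_2)$ and substitute $e=ce'$, with $c$ ranging over monic polynomials composed of primes dividing $f$:
\[
\frac{(m+1)(m+2)}2=\sum_{\deg e\le m}\frac{\tau(e)}{|e|}=\sum_{c}\frac{H(m-\deg c)}{|c|}\le H(m)\prod_{p\mid f}\Bigl(1-\frac1{|p|}\Bigr)^{-1} .
\]
Hence the count is at most $\frac{8q^n}{n^2}\prod_{p\mid f}(1-|p|^{-1})^{-1}$.

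The missing idea is then
\[
\prod_{p\mid f}\Bigl(1-\frac1{|p|}\Bigr)^{-1}=\prod_{p\mid f}\Bigl(1+\frac1{|p|}\Bigr)\prod_{p\mid f}\Bigl(1-\frac1{|p|^2}\Bigr)^{-1}\le\frac{q}{q-1}\prod_{p\mid f}\Bigl(1+\frac1{|p|}\Bigr),
\]
because $\prod_{p}(1-|p|^{-2})^{-1}=\sum_{e\ \mathrm{monic}}|e|^{-2}=\sum_{k\ge0}q^{-k}=q/(q-1)$. The factor $q/(q-1)$ in the lemma is precisely $\zeta_{\mathbb F_q[x]}(2)$, and it is consumed entirely by this step.
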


Next we need a count of irreducible polynomials in a prescribed residue class. 
Let \(M\in\mathbb F_q[x]\) be a fixed polynomial of degree \(N\), let \(m\ge1\), and let \(h\in\mathbb F_q[x]\) satisfy \((h,M)=1\). 
Denote by \(\pi_m(h)\) the number of monic irreducible polynomials \(p\in\mathcal I_q(m)\) with \(p\equiv h\pmod M\). 
The asymptotic
\[
\pi_m(h)=\frac{q^m}{m\varphi_q(M)}+O\!\left(\frac{q^{m/2}}{m}\right)
\]
is classical (see, e.g., \cite[Theorem 4.8]{rosen2002}). 
In the proof of Theorem~\ref{thm:many} we require a version of this estimate that is explicit and uniform in the modulus; the following bound of Wan \cite[Theorem 5.1]{wan1997} serves this purpose.

\begin{lemma}\label{lem:wan}
Let \(M\in\mathbb F_q[x]\) have degree \(N\), let \(m\ge1\), and let
\(h\in\mathbb F_q[x]\) satisfy \((h,M)=1\). Then
\[
\left|\pi_m(h)-\frac{q^m}{m\varphi_q(M)}\right|
\le\frac{(N+1)q^{m/2}}{m}.
\]
\end{lemma}

For real \(r>1\), positive integer \(m\), \(\ell\in\mathbb Z\), and \(K>0\), we consider the congruence
\[
\lfloor k^r\rfloor\equiv\ell\pmod m
\]
and denote by \(N(K,\ell,m)\) the number of its solutions with \(1\le k\le K\). Chen--Xu \cite[Theorem 3.1]{chen2024} obtained an upper bound for \(N(K,\ell,m)\) in the case of integer exponents \(r\), while for non-integer \(r\), Ding--Zhai \cite{ding2026} employed the method of exponent pairs for the specific value of \(K\) arising in their application. 
We record the integer case as (i); the non-integer case (ii), which holds for arbitrary \(K\), is the form required in the proof of Theorem~\ref{thm:dens}, and we prove it in full.

\begin{lemma}\label{lem:cong}
Let \(r>1\), \(m\) be a positive integer, \(\ell\in\mathbb Z\), and \(K>0\), and let \(N(K,\ell,m)\) be as above.
\begin{enumerate}[label=(\roman*)]
\item \cite[Theorem 3.1]{chen2024} If \(r\in\mathbb Z\) and \(r\ge2\), then for every \(\varepsilon>0\) there exists a constant
\(c=c(r,\varepsilon)>0\) such that
\[
N(K,\ell,m)\le c\left(Km^{-1/r}+m^{\varepsilon}\right),
\]
and if \(m\ge K^r\), then \(N(K,\ell,m)\le 1\).
\item If \(r\notin\mathbb Z\), let \(b=\lfloor r\rfloor+1\) and \(B=2^b\). Then there exists a constant
\(c=c(r)>0\) such that
\[
N(K,\ell,m)\le c\left(\frac{K}{m}+K^{1-\eta}m^{-\alpha}+m^{1/r}\right),
\quad
\eta=\frac{b+2-r}{4B-1},\quad \alpha=\frac{1}{4B-1}.
\]
In particular, \(\eta=\eta(r)>0\) and \(\alpha=\alpha(r)>0\) depend only on \(r\).
\end{enumerate}
\end{lemma}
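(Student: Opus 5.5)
The plan is to turn the congruence into an equidistribution problem modulo one and then use exponential sums. Write \(\lfloor k^r\rfloor=\ell+jm\) for some \(j\in\mathbb Z\). This holds exactly when \(\ell+jm\le k^r<\ell+jm+1\), that is, when
\[
\Big\{\frac{k^r-\ell}{m}\Big\}\in\Big[0,\frac1m\Big).
\]
So \(N(K,\ell,m)\) counts the \(k\le K\) for which the sequence \(f(k)=(k^r-\ell)/m\) falls into an interval of length \(1/m\) modulo \(1\).

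First, I discard the range \(k\le m^{1/r}\) trivially; this gives the term \(m^{1/r}\). I then split the remaining range \(m^{1/r}<k\le K\) into \(O(\log K)\) dyadic blocks \(X<k\le 2X\). On each block I apply the Erdős–Turán inequality, or better Vaaler's majorant so that no stray logarithms appear, with a parameter \(H\ge1\):
\[
\#\{X<k\le 2X:\ \{f(k)\}\in[0,1/m)\}\ \ll\ \frac{X}{m}+\frac{X}{H}+\sum_{1\le h\le H}\frac1h\Big|\sum_{X<k\le 2X}e\Big(\frac{hk^r}{m}\Big)\Big|.
\]
Summing the terms \(X/m\) over the blocks gives the main term \(K/m\).

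Next, I bound the inner exponential sums with van der Corput's \(b\)-th derivative test, where \(b=\lfloor r\rfloor+1\). Since \(r\notin\mathbb Z\), the \(b\)-th derivative of \(hk^r/m\) does not vanish, and on the block it has size \(\lambda\asymp (h/m)X^{r-b}\). The test gives
\[
\sum_{X<k\le 2X}e(hk^r/m)\ll X\lambda^{1/(2^b-2)}+X^{1-2^{2-b}}\lambda^{-1/(2^b-2)}.
\]
I use it in the standard form with \(B=2^b\); other admissible forms would only change the numerical exponents. The first term is harmless because \(r<b\) makes \(\lambda\) small. The second term is the dangerous one, since \(\lambda^{-1}\) is large. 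Here I use \(X>m^{1/r}\) and \(h\ge1\) to control \(\lambda^{-1}\le m X^{b-r}\), and I keep this crude but uniform in \(\ell\).

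After summing over \(h\le H\) with weights \(1/h\), each block is bounded by
\[
\ll \frac{X}{m}+\frac XH+X\Big(\frac{H}{m}X^{r-b}\Big)^{1/(2B-2)}\cdots+\text{(second term)}.
\]
I choose \(H\) as a power \(X^{a}m^{c}\) so that \(X/H\) balances the exponential-sum terms. The resulting saving on each block should be \(X^{1-\eta}m^{-\alpha}\) with \(\eta=(b+2-r)/(4B-1)\) and \(\alpha=1/(4B-1)\). Because \(\eta>0\), summing over dyadic \(X\le K\) gives a geometric series dominated by its top block, \(K^{1-\eta}m^{-\alpha}\).

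The main obstacle is this optimization step. I need a single choice of \(H\), valid for every block with \(X>m^{1/r}\) and all \(m\), that simultaneously controls both van der Corput terms and still yields positive exponents. The delicate part is the second term, \(X^{1-4/B}\lambda^{-1/(2B-2)}\) with \(2^{2-b}=4/B\), when \(X\) is near \(m^{1/r}\). If the direct choice fails there, I would first try letting \(H\) depend on \(X\) and absorbing the bad small-\(X\) blocks into the trivial term \(m^{1/r}\). Failing that, I would enlarge the threshold to \(X>m^{1/r+\epsilon}\) and check that the exponents still come out as stated.
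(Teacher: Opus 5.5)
\textbf{There is a genuine gap.} Your architecture matches the paper's: the paper packages the middle steps into Graham--Kolesnik's Lemma~4.3, and your steps are equidistribution of $(k^r-\ell)/m$ modulo $1$, the trivial range $k\le m^{1/r}$, dyadic blocks, Erd\H{o}s--Tur\'an/Vaaler with a parameter $H$, van der Corput, and geometric summation. The gap is the step you flag as the main obstacle, and none of your fallbacks closes it.

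With only the $b$-th derivative test in the form you quote, and the crude bound $\lambda_b^{-1}\le mX^{b-r}$ (your $\lambda$, $\lambda_b\asymp hX^{r-b}/m$), the secondary term is useless. At $X\approx m^{1/r}$ one has $mX^{b-r}\approx X^b$. So $X^{1-4/B}\lambda_b^{-1/(B-2)}\approx X^{1-4/B+b/(B-2)}$, which is trivial for $b=2,3$ and worse than trivial for $b\ge4$. For larger $X$ it carries a factor $m^{+1/(B-2)}$, so it never takes the shape $X^{1-\eta}m^{-\alpha}$. Concretely, for $b=2$ it contributes $m^{1/2}K^{1-r/2}$. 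For $r=1.1$ and $K=m^{1.2}$ this is $m^{1.04}$, while $K/m$, $K^{1-\eta}m^{-\alpha}$ and $m^{1/r}$ are all $\le m^{0.91}$. This term is dominated by $h=1$ and does not depend on $H$, so letting $H$ depend on $X$ does nothing. Absorbing the bad blocks into a trivial bound, or raising the threshold to $m^{1/r+\epsilon}$, produces $m^{c}$ with $c>1/r$ instead of the required $m^{1/r}$.

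The missing idea is to split the $h$-range by the size of the first derivative of the phase, $L_h=hX^{r-1}/m$.
\begin{enumerate}[label=(\alph*)]
\item For $L_h$ below a small constant, Kusmin--Landau gives $\sum_{X<k\le 2X}e(hk^r/m)\ll m/(hX^{r-1})$. With weights $1/h$ this is $\ll mX^{1-r}$, and summed over dyadic $X\ge m^{1/r}$ it is $\ll m^{1/r}$. This, not only the trivial range, is the real source of the $m^{1/r}$ term; it is the term $mK_0^{1-r}$ in the paper's (2.2).
\item For $L_h\gg1$ you need a bound $\ll L_h^{\kappa}X^{\lambda}$ valid uniformly in $L_h$, i.e.\ an exponent pair $(\kappa,\lambda)$. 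A single high-derivative test must be combined with lower-order tests wherever its secondary term dominates. Optimizing $H$ then gives $X^{(\lambda+\kappa r)/(1+\kappa)}m^{-\kappa/(1+\kappa)}$.
\end{enumerate}

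Even after this repair, the stated exponents do not come from the $b$-th derivative test. Its pair $(1/(B-2),\,1-(b-1)/(B-2))$ gives $\eta'=(b-r)/(B-1)$ and $\alpha'=1/(B-1)$. These degenerate as $r\to b^-$ and do not imply the lemma: for $r=2.9$ and $K=m^{10}$ you would get $m^{9.71}$, against the claimed $m^{9.29}$. The stated $\eta,\alpha$ come from the $(b+2)$-th derivative pair $(\kappa,\lambda)=(1/(4B-2),\,1-(b+1)/(4B-2))$, for which $\kappa/(1+\kappa)=\alpha$ and $(\lambda+\kappa r)/(1+\kappa)=1-\eta$. This is what the paper uses. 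Separately, your two normalisations $1/(2^b-2)$ and $1/(2B-2)$ are inconsistent with each other.
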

\begin{proof}
(ii) All implied constants below depend only on \(r\), and \(\psi(t)=\{t\}-1/2\) denotes the sawtooth function. 
Put \(C_0=\max\{3,m^{1/r}\}\). 
We may assume \(K>C_0\); otherwise, since \(m^{1/r}\ge1\),
\[
N(K,\ell,m)\le K\le C_0\ll m^{1/r}.
\]

An integer \(j\ge0\) satisfies
\[
\frac{k^r-\ell-1}{m}<j\le\frac{k^r-\ell}{m}
\]
if and only if
\[
\left\lfloor\frac{k^r-\ell}{m}\right\rfloor
-\left\lfloor\frac{k^r-\ell-1}{m}\right\rfloor=1.
\]
Summing over \(k\le K\) and inserting
\(\lfloor t\rfloor=t-\tfrac12-\psi(t)\), it follows that
\begin{equation}\label{2.1}
N(K,\ell,m)\ll\frac{K}{m}+\max_{|\theta|\le1}
\left|\sum_{k\le K}\psi\left(\frac{k^r}{m}+\theta\right)\right|.
\end{equation}
Write the \(\psi\)-sum as \(S_1+S_2\), where \(S_1\) is over
\(k\le C_0\) and \(S_2\) is over \(C_0<k\le K\). Trivially,
\[
S_1\ll C_0\ll m^{1/r}.
\]
For \(S_2\), by the van der Corput bound \cite[Lemma 4.3]{graham1991}
with \(Y=1/m\), for any exponent pair \((\kappa,\lambda)\), any real
\(K_0\ge3\) and any \(K'\) with \(K_0\le K'\le2K_0\),
\begin{equation}\label{2.2}
\sum_{K_0\le k<K'}\psi\left(\frac{k^r}{m}+\theta\right)
\ll mK_0^{1-r}+m^{-\frac{\kappa}{1+\kappa}}K_0^{\frac{\lambda+\kappa r}{1+\kappa}}.
\end{equation}
By \cite[p.~60]{graham1991},
\[
(\kappa,\lambda)=\left(\frac{1}{4B-2},\,1-\frac{b+1}{4B-2}\right)
\]
is an exponent pair, for which
\[
\frac{\kappa}{1+\kappa}=\frac{1}{4B-1}=\alpha,\qquad
\frac{\lambda+\kappa r}{1+\kappa}=1-\frac{b+2-r}{4B-1}=1-\eta,
\]
and \(0<\eta<1\) since \(r<b+2\).

Let \(K_0\) run over the dyadic values \(K_j=2^jC_0\)
(\(j=0,1,\dots,J\)) with \(K_J\le K<2K_J\), and apply
(\ref{2.2}) to the intervals \([K_j,2K_j)\) for \(j<J\) and to the final
interval \([K_J,K)\) for \(j=J\), which is admissible since
\(K_J\ge C_0\ge3\) and \(K\le2K_J\). The contribution of the first term
on the right-hand side of (\ref{2.2}) is
\[
\sum_{j=0}^{J}mK_j^{1-r}
=mC_0^{1-r}\sum_{j=0}^{J}2^{-j(r-1)}
\ll_r mC_0^{1-r}\le m^{1/r},
\]
since \(r>1\) and \(C_0\ge m^{1/r}\); that of the second is
\[
\sum_{j=0}^{J}m^{-\alpha}K_j^{1-\eta}
=m^{-\alpha}C_0^{1-\eta}\sum_{j=0}^{J}2^{j(1-\eta)}
\ll_r m^{-\alpha}C_0^{1-\eta}\left(\frac{K}{C_0}\right)^{1-\eta}
=m^{-\alpha}K^{1-\eta},
\]
since \(1-\eta>0\). Combining these with a splitting argument yields
\[
S_2=\sum_{C_0<k\le K}\psi\left(\frac{k^r}{m}+\theta\right)
\ll_r m^{1/r}+m^{-\alpha}K^{1-\eta},
\]
which together with the estimate \(S_1\ll m^{1/r}\) and
(\ref{2.1}) proves the lemma.
\end{proof}

A key step in Romanoff-type problems is the convergence of the series
\[
\sum_{\substack{m\ge1\\(m,a)=1}}\frac{1}{m\operatorname{ord}_m(a)}<\infty,
\]
where \(\operatorname{ord}_m(a)\) denotes the multiplicative order of
\(a\) modulo \(m\). 
Romanoff \cite{romanoff1934} proved this convergence, which drives his density argument. 
In 1935, Landau \cite{landau1935} obtained the explicit bound
\[
\sum_{\substack{m\ge1\\(m,a)=1}}\frac{1}{m\operatorname{ord}_m(a)}
\ll(\log\log a)^2,
\]
and Erd\H{o}s--Tur\'an \cite{erdos1935} proved the sharper estimate, for every \(\varepsilon>0\),
\[
\sum_{\substack{m\ge1\\(m,a)=1}}\frac{1}{m\operatorname{ord}_m(a)^{\varepsilon}}
\ll_{\varepsilon}\log\log a,
\]
which has since been the standard tool in Romanoff-type problems. 
Murty, Rosen and Silverman \cite{murty1996} made these bounds explicit in 1996 and extended them to number fields and abelian varieties, and Kuan \cite{kuan2015} established the corresponding function-field and Drinfeld module analogues in 2015.

The following lemma is the case of the rational function field
\(\mathbb F_q(x)\) and the rank-one subgroup \(\Gamma=\langle g\rangle\)
of Kuan's theorem \cite[Theorem 2]{kuan2015}, restricted to squarefree moduli and written in polynomial notation. 
For completeness and for later reference, we include a proof; the argument will reappear in modified form in the proofs of the main theorems.

\begin{lemma}\label{lem:conv}
Let \(g\in\mathbb F_q[x]\) be a fixed nonconstant polynomial. For any
real number \(\varepsilon>0\), the series
\[
\sum_{\substack{f\ \text{monic}\\(f,g)=1}}
\frac{\mu^2(f)}{|f|\cdot\operatorname{ord}_f(g)^{\varepsilon}}
\]
converges, where the sum is over monic polynomials \(f\) and \(\operatorname{ord}_f(g)\) denotes the multiplicative order of \(g\) modulo \(f\), with the convention \(\operatorname{ord}_1(g)=1\).
\end{lemma}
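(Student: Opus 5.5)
Group the squarefree monic \(f\) coprime to \(g\) according to the value \(d=\operatorname{ord}_f(g)\), following the Erd\H{o}s--Tur\'an argument. If \(\operatorname{ord}_f(g)=d\), then \(f\mid g^d-1\). Here \(g^d-1\) is a nonzero polynomial, since \(g\) is nonconstant, and it has degree \(\delta d\). So the series equals
\[
\sum_{d\ge1}\frac{1}{d^{\varepsilon}}\sum_{\substack{f\ \text{monic squarefree}\\ \operatorname{ord}_f(g)=d}}\frac{1}{|f|}
\le \sum_{d\ge1}\frac{1}{d^{\varepsilon}}\sum_{\substack{f\mid g^d-1\\ \operatorname{ord}_f(g)=d}}\frac{\mu^2(f)}{|f|}.
\]
A direct bound \(\sum_{f\mid G}\mu^2(f)/|f|=\prod_{p\mid G}(1+|p|^{-1})\ll\log\deg G\) only gives \(\sum_d d^{-\varepsilon}\log d\), which diverges. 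The order condition therefore has to be exploited by summing over \(d\) in ranges.

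The plan is to bound, for each \(D\), the partial sum
\[
A(D)=\sum_{\substack{f\ \text{squarefree},\,(f,g)=1\\ \operatorname{ord}_f(g)\le D}}\frac{1}{|f|},
\]
and then apply partial summation. Every such \(f\) divides
\[
P_D=\prod_{d\le D}(g^d-1),
\]
a nonzero polynomial of degree at most \(\delta D^2\). Hence
\[
A(D)\le\prod_{p\mid P_D}\Bigl(1+\frac1{|p|}\Bigr).
\]
I will bound this product by the analogous product over the irreducibles \(p\) of smallest degree. A polynomial of degree \(X=\delta D^2\) has at most \(X/j\) irreducible factors of degree \(j\). Replacing the factors of \(P_D\) by all irreducibles of degree \(\le m\), where \(m\) is minimal with \(\sum_{j\le m}|\mathcal I_q(j)|\,j\ge X\), can only increase the product, by a greedy/rearrangement argument. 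Lemma~\ref{lem:irred-count} gives \(q^m\asymp X\), so \(m\approx\log_q X\). Then
\[
\prod_{j\le m}\Bigl(1+q^{-j}\Bigr)^{|\mathcal I_q(j)|}\le\exp\Bigl(\sum_{j\le m}\frac1j\Bigr)\ll m,
\]
so \(A(D)\ll\log D\), with the implied constant depending on \(q\) and \(\delta\).

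Partial summation then finishes the argument:
\[
\sum_{f}\frac{\mu^2(f)}{|f|\operatorname{ord}_f(g)^{\varepsilon}}
=\sum_{d\ge1}d^{-\varepsilon}\bigl(A(d)-A(d-1)\bigr)
\le\sum_{d}A(d)\bigl(d^{-\varepsilon}-(d+1)^{-\varepsilon}\bigr)
\ll\sum_d\frac{\log d}{d^{1+\varepsilon}}<\infty.
\]
Before summing by parts, one checks that \(A(D)D^{-\varepsilon}\to0\), which is immediate from \(A(D)\ll\log D\). This justifies the rearrangement of nonnegative terms.

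The main obstacle is the step bounding \(\prod_{p\mid P_D}(1+1/|p|)\) by \(O(\log\deg P_D)\), the Mertens-type estimate for a polynomial of given degree. I will make it rigorous by sorting the distinct irreducible factors of \(P_D\) by degree. The \(i\)-th smallest factor then has degree at least the degree of the \(i\)-th irreducible in the degree-ordered list of all monic irreducibles. This holds because the total degree of the factors is at most \(X\), while the irreducibles of degree \(\le j\) are only \(|\mathcal I_q(j)|\) many per degree. The rest of the argument is routine.
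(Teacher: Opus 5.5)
Your proof is correct and follows the paper's route. You group the squarefree $f$ by $\operatorname{ord}_f(g)$, note that every $f$ of order at most $D$ divides $\prod_{k\le D}(g^k-1)$ (degree at most $\delta D^2$), bound the partial sums by $O(\log D)$, and conclude by Abel summation. The only variation is in the Mertens-type estimate $\prod_{p\mid h}(1+1/|p|)\ll\log\deg h$: the paper splits at $|p|=\log|h|$ and cites the function-field Mertens theorem, whereas your rearrangement argument is a self-contained and equally valid substitute. In it, the $i$-th smallest factor has degree at least that of the $i$-th irreducible by distinctness alone, the total-degree bound confines the comparison to degree at most $m\approx\log_q X$, and $|\mathcal I_q(j)|\le q^j/j$ then finishes.
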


\begin{proof}
For \(c\in\mathbb F_q^\times\), multiplication by \(c\) preserves \(|f|\), \((f,g)\), \(\operatorname{ord}_f(g)\), and \(\mu^2(f)\), so convergence over all nonzero \(f\) follows from the monic case up to the factor \(q-1\).

We first prove that for any nonzero \(h\in\mathbb F_q[x]\) with
\(|h|>e\),
\begin{equation}\label{2.3}
\sum_{\substack{f\ \text{monic}\\f\mid h}}\frac1{|f|}
\ll\log\log|h|. 
\end{equation}
Indeed, by unique factorization,
\[
\sum_{f\mid h}\frac1{|f|}
\le\prod_{p\mid h}\left(1-\frac1{|p|}\right)^{-1},
\]
where the product runs over monic irreducible divisors of \(h\). 
If \(u\) of these satisfy \(|p|>\log|h|\), then \(u\ll\log|h|/\log\log|h|\), and
\[
\prod_{\substack{p\mid h\\|p|>\log|h|}}\left(1-\frac1{|p|}\right)^{-1}
\le\left(1-\frac1{\log|h|}\right)^{-u}
\le\exp\left(\frac{u}{\log|h|-1}\right)=O(1).
\]
For the remaining primes, the function-field Mertens theorem \cite[Theorem 3]{rosen1999} yields
\[
\prod_{\substack{p\mid h\\|p|\le\log|h|}}\left(1-\frac1{|p|}\right)^{-1}
\le\prod_{|p|\le\log|h|}\left(1-\frac1{|p|}\right)^{-1}
\ll\log\log|h|.
\]
These bounds are uniform for all sufficiently large \(|h|\); the finitely many smaller \(h\) are absorbed into the implied constant, so (\ref{2.3}) holds.

For \(m\ge1\), set
\[
d(m)=\sum_{\substack{f\ \text{monic}\\(f,g)=1\\
\operatorname{ord}_f(g)=m}}\frac{\mu^2(f)}{|f|}.
\]
We claim that
\begin{equation}\label{2.4}
\sum_{m\le T}d(m)\ll\log T 
\end{equation}
for all \(T\ge2\). 
Let
\[
A_T=\prod_{k=1}^{T}(g^k-1).
\]
If \(\operatorname{ord}_f(g)=m\le T\), then \(f\mid g^m-1\), hence \(f\mid A_T\). 
Consequently,
\[
\sum_{m\le T}d(m)\le\sum_{\substack{f\ \text{monic}\\f\mid A_T}}\frac1{|f|}.
\]
Since \(g\) is nonconstant, \(\deg(g^k-1)=k\deg g\), so
\[
\deg A_T=\sum_{k=1}^{T}k\deg g=\frac{T(T+1)}2\deg g\le T^2\deg g.
\]
As \(\deg A_T\ge3\), (\ref{2.3}) applies with \(h=A_T\) and gives
\[
\sum_{m\le T}d(m)\ll\log\log|A_T|\ll\log T,
\]
which proves (\ref{2.4}).

Finally, let
\[
D(t)=\sum_{m\le t}d(m).
\]
By (\ref{2.4}), \(D(t)\ll\log t\) for \(t\ge2\), while \(D(t)\le D(2)=O(1)\) for \(1\le t<2\). 
By Abel summation,
\[
\sum_{m\le N}\frac{d(m)}{m^{\varepsilon}}
=\frac{D(N)}{N^{\varepsilon}}
+\varepsilon\int_1^N\frac{D(t)}{t^{1+\varepsilon}}\,dt.
\]
The boundary term tends to \(0\), since \(D(N)N^{-\varepsilon}\ll(\log N)N^{-\varepsilon}\to0\). The integral is bounded by
\[
\varepsilon\int_1^2\frac{dt}{t^{1+\varepsilon}}
+\varepsilon\int_2^{\infty}\frac{\log t}{t^{1+\varepsilon}}\,dt<\infty.
\]
Since \(d(m)\ge0\), the partial sums are increasing, so the series over monic polynomials converges. 
\end{proof}
\section{Proofs of the Theorems}
Throughout this section, all irreducible polynomials are monic, so that a product over \(p\mid H\) runs over the monic irreducible divisors of \(H\). 
All sums and counts over polynomials \(f\) are over monic polynomials of degree \(n\) unless otherwise indicated.

\begin{proof}[\textbf{Proof of Theorem~\ref{thm:many}}]
Set
\[
m=\left\lfloor\frac12\log_q n\right\rfloor,\qquad
M=\prod_{\substack{\deg p\le m\\p\nmid g}}p,
\]
the product being over monic irreducible polynomials \(p\), and write
\(N=\deg M\). By Lemma~\ref{lem:irred-count},
\[
N=\sum_{\substack{\deg p\le m\\p\nmid g}}\deg p
=\sum_{k\le m}k\,|\mathcal I_q(k)|+O(\delta),
\]
the error \(O(\delta)\) accounting for the monic irreducible divisors of
\(g\), whose total degree is at most \(\delta=\deg g\). Applying
Lemma~\ref{lem:irred-count} again,
\[
\sum_{k\le m}k\,|\mathcal I_q(k)|
=\sum_{k\le m}q^{k}+O\!\left(\sum_{k\le m}q^{k/2}\right)
=\frac{q}{q-1}q^{m}+O(q^{m/2}),
\]
so that
\[
N=\frac{q}{q-1}q^{m}+O(q^{m/2})\asymp q^{m}\asymp\sqrt n.
\]
By the function-field Mertens theorem,
\begin{equation}\label{3.1}
\frac{|M|}{\varphi_q(M)}
=\prod_{\substack{\deg p\le m\\p\nmid g}}\left(1-\frac1{|p|}\right)^{-1}
\asymp m. 
\end{equation}
All implied constants from now on depend only on \(q\) and \(\delta\).

Consider the set
\[
S=\left\{(h,k): h\in\mathcal I_q(n),\ k\in\mathbb N,\ \delta k<n,\
h+g^k\equiv0\pmod M\right\}.
\]
There are \(n/\delta+O(1)\) admissible values of \(k\). For each such
\(k\), since \((g,M)=1\) by construction, the residue class
\(-g^k\bmod M\) is coprime to \(M\), and Lemma~\ref{lem:wan} gives
\[
\left|\{h\in\mathcal I_q(n): h\equiv-g^k\pmod M\}\right|
=\frac{q^n}{n\varphi_q(M)}+O\!\left(\frac{Nq^{n/2}}n\right).
\]
Summing over the admissible \(k\), we obtain
\begin{equation}\label{3.2}
S=\frac{q^n}{\delta\varphi_q(M)}
+O\!\left(\frac{q^n}{n\varphi_q(M)}\right)+O\!\left(Nq^{n/2}\right). 
\end{equation}
Each pair \((h,k)\) counted by \(S\) determines the monic polynomial \(f=h+g^k\), which has degree \(n\) because \(\deg h=n\) while \(\deg g^k=\delta k<n\), and which satisfies \(M\mid f\). 
Grouping the pairs according to \(f\), we get
\[
S=\sum_{M\mid f}R(f,n).
\]
There are \(q^{n-N}\) monic polynomials \(f\) of degree \(n\) divisible by \(M\), provided \(n\ge N\); this holds for all sufficiently large \(n\), since \(N\asymp\sqrt n\). 
Averaging, some monic \(f_n\) of degree \(n\) satisfies
\begin{equation}\label{3.3}
R(f_n,n)\ge\frac{S}{q^{n-N}}
=\frac1\delta\frac{|M|}{\varphi_q(M)}
+O\!\left(\frac1n\frac{|M|}{\varphi_q(M)}\right)+O\!\left(Nq^{N-n/2}\right).
\end{equation}
By (\ref{3.1}), the main term is \(\asymp m/\delta\). The first error term is \(O(m/n)=o(1)\). 
Since \(N\asymp\sqrt n\), the second error term is
\(O\!\left(q^{O(\sqrt n)-n/2}\right)=o(1)\). 
Therefore
\[
R(f_n,n)\gg m/\delta\gg\log n,
\]
which proves the theorem.
\end{proof}

\begin{proof}[\textbf{Proof of Theorem~\ref{thm:multi}}]
Consider the first moment
\[
T_n=\sum_{\deg f=n}R(f,n)
=\left|\{(h,k): h\in\mathcal I_q(n),\ k\in\mathbb N,\ \delta k<n\}\right|.
\]
There are \(|\mathcal I_q(n)|\) choices for \(h\) and \(n/\delta+O(1)\) choices for \(k\), whence, by Lemma~\ref{lem:irred-count},
\[
T_n=\left(\frac{q^n}{n}+O\!\left(\frac{q^{n/2}}n\right)\right)
\left(\frac n\delta+O(1)\right)
=\frac{q^n}{\delta}+O\!\left(\frac{q^n}{n}\right),
\]
the last step following from \(q^{n/2}\ll q^n/n\) for sufficiently large \(n\).

Splitting \(\sum_f(R(f,n)-1)\) according to whether \(R(f,n)=0\), \(R(f,n)=1\), or \(R(f,n)>1\), and noting that the terms with
\(R(f,n)=1\) contribute nothing, we get
\[
\sum_{f\in \mathcal{S}_n}\left(R(f,n)-1\right)
=T_n-q^n+\left|\{f:R(f,n)=0\}\right|.
\]
Every \(f\) with \((f,g)>1\) satisfies \(R(f,n)=0\), and for \(n\ge\delta\) exactly \(\rho_gq^n\) monic polynomials of degree \(n\) are coprime to \(g\); hence
\[
\left|\{f:R(f,n)=0\}\right|\ge(1-\rho_g)q^n,
\]
and therefore
\begin{equation}\label{3.4}
\sum_{f\in \mathcal{S}_n}\left(R(f,n)-1\right)
\ge T_n-\rho_gq^n
=\left(\frac1\delta-\rho_g\right)q^n+O\!\left(\frac{q^n}{n}\right).
\end{equation}
Since \(\delta\rho_g<1\), the coefficient \(1/\delta-\rho_g\) is positive.

It remains to bound the second moment. Shparlinski--Weingartner \cite[Eq.(4.3)]{shparlinski2017} proved
\begin{equation}\label{3.5}
\sum_{\deg f=n}R(f,n)^2\ll_{q,g}q^n; 
\end{equation}
indeed, their estimate concerns the representation function \(C(f,n)\), which counts in addition the exponent \(k=0\), and since \(R(f,n)\le C(f,n)\) pointwise, (\ref{3.5}) follows.

Combining (\ref{3.4}) and (\ref{3.5}) with the trivial bound \(R(f,n)-1\le R(f,n)\) and the Cauchy--Schwarz inequality, we obtain
\begin{align*}
\left(\frac1\delta-\rho_g\right)q^n+O\!\left(\frac{q^n}{n}\right)
&\le \sum_{f\in\mathcal S_n}\left(R(f,n)-1\right)
\le \sum_{f\in\mathcal S_n}R(f,n) \\
&\le \left(\sum_f R(f,n)^2\right)^{1/2}S(n)^{1/2}
\ll_{q,g} q^{n/2}S(n)^{1/2}.
\end{align*}
Since \(1/\delta-\rho_g>0\) and \(q^n/n=o(q^n)\), it follows that \(S(n)\gg q^n\) for all sufficiently large \(n\), where the implied constant depends only on \(q\) and \(g\).
\end{proof}

\begin{remark}
The argument above used only the trivial lower bound
\[
\left|\{f:R(f,n)=0\}\right|\ge(1-\rho_g)q^n.
\] 
Theorem~\ref{thm:nonrep} below provides a stronger one and thereby relaxes the hypothesis of Theorem~\ref{thm:multi}. Indeed, let \(p\) be a monic irreducible divisor of \(g-1\); then \(p\nmid g\). 
For \(n\ge\delta+\deg p\), the Chinese remainder theorem shows that exactly
\[
\varphi_q(g)q^{n-\delta-\deg p}=\frac{\rho_g}{|p|}q^n
\]
monic polynomials \(f\) of degree \(n\) satisfy \(f\equiv1\pmod p\) and \((f,g)=1\). 
By the case \(t=1\) of Theorem~\ref{thm:nonrep}, each such
\(f\) has \(R(f,n)=0\), and these \(f\) are disjoint from the \((1-\rho_g)q^n\) polynomials with \((f,g)>1\). 
Hence
\[
\left|\{f:R(f,n)=0\}\right|\ge\left(1-\rho_g+\frac{\rho_g}{|p|}\right)q^n,
\]
and (\ref{3.4}) improves to
\[
\sum_{f\in \mathcal{S}_n}\left(R(f,n)-1\right)
\ge\left(\frac1\delta-\rho_g+\frac{\rho_g}{|p|}\right)q^n
+O\!\left(\frac{q^n}{n}\right).
\]
Consequently the hypothesis \(\delta\rho_g<1\) may be relaxed to \(\delta\rho_g(1-1/|p|)<1\). 
We shall not use this refinement.
\end{remark}

\begin{proof}[\textbf{Proof of Theorem~\ref{thm:nonrep}}]
Since \(\delta\ge1\), the polynomial \(g-1\) is nonconstant of degree
\(\delta\). By unique factorization in \(\mathbb F_q[x]\), we may choose
a monic irreducible divisor \(p\) of \(g-1\). Then \(g\equiv1\pmod p\),
and in particular \(p\nmid g\).

For every nonnegative integer \(k\), the congruence
\(g\equiv1\pmod p\) implies \(g^k\equiv1\pmod p\). Since each
\(\lfloor k_i^{r_i}\rfloor\) is a nonnegative integer, every summand
\(g^{\lfloor k_i^{r_i}\rfloor}\) is congruent to \(1\) modulo \(p\).
Hence every admissible power sum
\[
a=\sum_{i=1}^{t}g^{\lfloor k_i^{r_i}\rfloor}
\]
satisfies \(a\equiv t\pmod p\), because there are exactly \(t\) summands.

Now let \(f\) be a monic polynomial of degree \(n>\deg p\) with
\(f\equiv t\pmod p\). For any admissible power sum \(a\), we have
\(a\equiv t\equiv f\pmod p\), so \(p\mid f-a\). Since \(\deg a<n\), the
leading term of \(f-a\) coincides with that of \(f\), and therefore
\(\deg(f-a)=n>\deg p\). Thus \(p\) is a nonconstant proper divisor of
\(f-a\), so \(f-a\) is reducible and cannot be an irreducible polynomial
of degree \(n\). Therefore no such \(f\) admits a representation
\(f=h+a\) with \(h\in\mathcal I_q(n)\). This proves the theorem.
\end{proof}

\begin{proof}[\textbf{Proof of Theorem~\ref{thm:dens}}]
All implied constants in this proof depend only on \(q\), \(g\) and \(r_1,\dots,r_t\). Suppose first that \(r_{i_0}\le1\) for some \(i_0\). Since the steps of the sequence \(\{k^{r_{i_0}}\}_{k\ge1}\) do not exceed \(1\), the sequence \(\{\lfloor k^{r_{i_0}}\rfloor:k\in\mathbb N\}\) contains every positive integer; hence \(\{g^{\lfloor k^{r_{i_0}}\rfloor}:k\in\mathbb N\}\) contains all powers \(g^m\) with \(m\ge1\). 
Taking \(k_i=1\) for \(i\ne i_0\), each summand for \(i\ne i_0\) equals \(g\), and
\[
h+g^{m}+(t-1)g,\qquad h\in\mathcal I_q(n),\quad \delta m<n,
\]
lies in \(\mathcal{R}_n(g,q;r_1,\dots,r_t)\). 
In other words, \(\mathcal{R}_n(g,q;r_1,\dots,r_t)\) contains a translate of the set counted by \(R(n,g,q)\) in Theorem~\ref{thm:sw}. 
Restricting the exponent in Theorem~\ref{thm:sw} to \(m\ge1\) with \(\delta m<n\) loses only the value \(m=0\), hence only \(O(q^n/n)\) polynomials, so Theorem~\ref{thm:sw} still yields a positive proportion, and the assertion follows. 
Henceforth we assume that \(r_i>1\) for all \(1\le i\le t\).

Since \(\sum_{i=1}^{t}r_i^{-1}\ge1\), there exists a smallest integer \(s\ge2\) such that
\[
\sum_{i=1}^{s-1}\frac1{r_i}<1\le\sum_{i=1}^{s}\frac1{r_i},
\]
and therefore a real number \(\lambda\ge1\) satisfying
\begin{equation}\label{3.6}
\sum_{i=1}^{s-1}\frac1{r_i}+\frac1{\lambda r_s}=1. 
\end{equation}
Put \(r'_i=r_i\) for \(1\le i\le s-1\) and \(r'_s=\lambda r_s\), so that
\(\sum_{i=1}^{s}1/r'_i=1\). 
Define
\[
\mathcal{A}=\left\{\sum_{i=1}^{s-1}g^{\lfloor k_i^{r_i}\rfloor}
+g^{\lfloor\lfloor k_s^{\lambda}\rfloor^{r_s}\rfloor}:
k_i\in\mathbb N,\ \delta\lfloor k_i^{r_i}\rfloor<n\ (1\le i\le s-1),\
\delta\lfloor\lfloor k_s^{\lambda}\rfloor^{r_s}\rfloor<n\right\}.
\]
Every element of \(\mathcal{A}\) has degree \(<n\). 
Setting \(k_{s+1}=\dots=k_t=1\), we obtain
\[
\{a+(t-s)g: a\in\mathcal{A}\}\subseteq
\left\{\sum_{i=1}^{t}g^{\lfloor k_i^{r_i}\rfloor}:k_i\in\mathbb N,
\delta\lfloor k_i^{r_i}\rfloor<n\right\},
\]
so that \(\mathcal{R}_n(g,q;r_1,\dots,r_t)\) contains the translate of \(\mathcal I_q(n)+\mathcal A\) by the fixed polynomial \((t-s)g\).
It therefore suffices to prove that \(\mathcal I_q(n)+\mathcal{A}\) has positive proportion among the monic polynomials of degree \(n\).

We first prove that \(|\mathcal{A}|\asymp n\). If \(k_1,\dots,k_s,l_1,\dots,l_s\) are positive integers such that
\[
\lfloor k_1^{r_1}\rfloor,\dots,\lfloor k_{s-1}^{r_{s-1}}\rfloor,
\left\lfloor\lfloor k_s^{\lambda}\rfloor^{r_s}\right\rfloor
\]
are distinct, and
\[
\lfloor l_1^{r_1}\rfloor,\dots,\lfloor l_{s-1}^{r_{s-1}}\rfloor,
\left\lfloor\lfloor l_s^{\lambda}\rfloor^{r_s}\right\rfloor
\]
are also distinct, then, since each term \(g^{m}\) has degree exactly \(\delta m\) and the exponents within each sum are distinct, no cancellation occurs and the highest-degree term of each sum is unique.
By induction on the highest degree, the two sums
\[
\sum_{i=1}^{s-1}g^{\lfloor k_i^{r_i}\rfloor}
+g^{\lfloor\lfloor k_s^{\lambda}\rfloor^{r_s}\rfloor}
=
\sum_{i=1}^{s-1}g^{\lfloor l_i^{r_i}\rfloor}
+g^{\lfloor\lfloor l_s^{\lambda}\rfloor^{r_s}\rfloor}
\]
are equal if and only if
\[
\left(\lfloor k_1^{r_1}\rfloor,\dots,\lfloor k_{s-1}^{r_{s-1}}\rfloor,
\left\lfloor\lfloor k_s^{\lambda}\rfloor^{r_s}\right\rfloor\right)
\]
is a permutation of
\[
\left(\lfloor l_1^{r_1}\rfloor,\dots,\lfloor l_{s-1}^{r_{s-1}}\rfloor,
\left\lfloor\lfloor l_s^{\lambda}\rfloor^{r_s}\right\rfloor\right).
\]
It follows that \(|\mathcal{A}|\) is at least the number of \(s\)-tuples \((m_1,\dots,m_s)\) of positive integers with
\[
m_1<m_2<\cdots<m_s\le\frac n\delta
\]
for which there exist \(k_1,\dots,k_s\in\mathbb N\) such that
\[
\left\{\lfloor k_1^{r_1}\rfloor,\dots,\lfloor k_{s-1}^{r_{s-1}}\rfloor,
\left\lfloor\lfloor k_s^{\lambda}\rfloor^{r_s}\right\rfloor\right\}
=\{m_1,\dots,m_s\}.
\]
Since, for each fixed \(i\), a given value \(m_j\) corresponds to at most one \(k_i\), the number of such \(s\)-tuples \((k_1,\dots,k_s)\) is at most \(s!\) times the number of strictly increasing \(s\)-tuples \((m_1,\dots,m_s)\). 
Hence \(s!|\mathcal{A}|\) is at least the number of \(s\)-tuples \((k_1,\dots,k_s)\in\mathbb N^{s}\) for which
\[
\lfloor k_1^{r_1}\rfloor,\dots,\lfloor k_{s-1}^{r_{s-1}}\rfloor,
\left\lfloor\lfloor k_s^{\lambda}\rfloor^{r_s}\right\rfloor
\]
are distinct and do not exceed \(n/\delta\). The number of \(s\)-tuples \((k_1,\dots,k_s)\in\mathbb N^{s}\) satisfying \(\delta\lfloor k_i^{r_i}\rfloor\le n\) for \(1\le i\le s-1\) and \(\delta\left\lfloor\lfloor k_s^{\lambda}\rfloor^{r_s}\right\rfloor\le n\)
is
\[
\gg\left(\frac n\delta\right)^{\frac1{r_1}+\cdots+\frac1{r_{s-1}}
+\frac1{\lambda r_s}}
=\frac n\delta\gg n.
\]
The number of such \(s\)-tuples for which at least two of the exponents are equal is
\[
\ll\left(\frac n\delta\right)^{\frac1{r_1}+\cdots+\frac1{r_{s-1}}
+\frac1{\lambda r_s}-\frac1{\max\{r_1,\dots,r_{s-1},\lambda r_s\}}}
\ll n^{1-\frac1{\max_{1\le i\le s}r'_i}}.
\]
Since the exponent is strictly smaller than \(1\), this is \(o(n)\).
Hence \(|\mathcal{A}|\gg n\).

On the other hand, each element \(a\in\mathcal{A}\) corresponds to at least one \(s\)-tuple \((k_1,\dots, k_s)\) satisfying the degree constraints. 
Thus
\[
|\mathcal{A}|\le\left|\left\{(k_1,\dots,k_s)\in\mathbb N^{s}:
\delta\lfloor k_i^{r_i}\rfloor\le n\ (1\le i\le s-1),\
\delta\left\lfloor\lfloor k_s^{\lambda}\rfloor^{r_s}\right\rfloor\le n
\right\}\right|.
\]
For each \(1\le i\le s-1\), the number of \(k_i\) with
\(\delta\lfloor k_i^{r_i}\rfloor\le n\) is \(\ll n^{1/r_i}\), and for \(i=s\) the number is \(\ll n^{1/(\lambda r_s)}\). 
Hence
\[
|\mathcal{A}|\ll n^{\frac1{r_1}+\cdots+\frac1{r_{s-1}}+\frac1{\lambda r_s}}=n.
\]
Combining the upper and lower bounds, we obtain \(|\mathcal{A}|\asymp n\).

For each monic polynomial \(f\) of degree \(n\), let \(T(f)\) denote the number of representations
\[
f=h+a,\qquad h\in\mathcal I_q(n),\ a\in \mathcal{A}.
\]
All sums over \(f\) below run over the monic polynomials of degree \(n\). 
Since \(|\mathcal I_q(n)|\asymp q^n/n\) by Lemma~\ref{lem:irred-count} and \(|\mathcal{A}|\asymp n\), the first moment gives
\begin{equation}\label{3.7}
\sum_fT(f)=|\mathcal I_q(n)|\cdot|\mathcal{A}|\gg q^n. 
\end{equation}
Moreover, if \(T(f)\ge1\), then \(f=h+a\) for some \(h\in\mathcal I_q(n)\) and \(a\in\mathcal{A}\), whence \(h+a+(t-s)g\in \mathcal{R}_n(g,q;r_1,\dots,r_t)\); distinct \(f\) give distinct such polynomials, and therefore
\[
|\{f:T(f)\ge1\}|\le R_{n}(g,q;r_{1},\dots,r_{t}).
\]
By the Cauchy--Schwarz inequality,
\begin{equation}\label{3.8}
\left(\sum_fT(f)\right)^2
\le\left(\sum_fT(f)^2\right)\sum_{T(f)\ge1}1
\le\left(\sum_fT(f)^2\right)R_{n}(g,q;r_{1},\dots,r_{t}). 
\end{equation}
Combining (\ref{3.7}) and (\ref{3.8}), we obtain
\[
R_{n}(g,q;r_{1},\dots,r_{t})\ge\frac{\left(\sum_fT(f)\right)^2}{\sum_fT(f)^2}
\gg\frac{q^{2n}}{\sum_fT(f)^2}.
\]
Therefore, to prove the theorem, it suffices to show that
\begin{equation}\label{3.9}
\sum_fT(f)^2\ll q^n.
\end{equation}

Expanding the square and exchanging the order of summation,
\[
\sum_fT(f)^2
=\sum_{\substack{h_1-h_2=a_2-a_1\\h_i\in\mathcal I_q(n),\ a_i\in\mathcal{A}}}1 .
\]
Write \(\Delta=h_1-h_2=a_2-a_1\), and split the sum according to \(\Delta=0\) or \(\Delta\ne0\). 
The contribution of \(\Delta=0\) is \(|\mathcal I_q(n)|\cdot|\mathcal{A}|\ll q^n\), which is acceptable. 
For \(\Delta\ne0\), put
\[
\mathcal{J}=\sum_{\substack{\Delta\ne0\\\deg\Delta<n}}
\left|\{h\in\mathcal I_q(n):h+\Delta\in\mathcal I_q(n)\}\right|
\cdot\left|\{(a_1,a_2)\in\mathcal{A}^2:a_2-a_1=\Delta\}\right|.
\]
By Lemma~\ref{lem:twin},
\[
\left|\{h\in\mathcal I_q(n):h+\Delta\in\mathcal I_q(n)\}\right|
\ll\frac{q^n}{n^2}\prod_{p\mid\Delta}\left(1+\frac1{|p|}\right),
\]
and hence
\[
\mathcal{J}\ll\frac{q^n}{n^2}
\sum_{\substack{a_1,a_2\in\mathcal{A}\\a_1\ne a_2}}
\prod_{\substack{p\mid a_2-a_1\\p\nmid g}}\left(1+\frac1{|p|}\right),
\]
the irreducible divisors of \(g\) having been absorbed into the implied constant.

We next remove the irreducible divisors of large norm. 
Since \(\deg(a_2-a_1)<n\), the number of irreducible divisors \(p\mid a_2-a_1\)
with \(|p|>n\) is \(O(n/\log n)\), and for each of them
\(1+1/|p|<1+1/n\). Hence
\[
\prod_{\substack{p\mid a_2-a_1\\|p|>n}}\left(1+\frac1{|p|}\right)
<\left(1+\frac1n\right)^{O(n/\log n)}\ll1,
\]
so only the irreducible divisors of norm at most \(n\) contribute. 
Using the Euler product identity for squarefree divisors,
\[
\prod_{\substack{p\mid a_2-a_1\\|p|\le n,\ p\nmid g}}\left(1+\frac1{|p|}\right)
=\sum_{\substack{d\mid a_2-a_1\\(d,g)=1\\P^+(d)\le n}}\frac{\mu^2(d)}{|d|},
\]
where \(P^+(d)\) denotes the maximal norm of a monic irreducible divisor of \(d\), with the convention \(P^+(1)=1\). 
All sums over \(d\) in the remainder of the proof are over monic polynomials.
Substituting into the estimate for \(\mathcal{J}\), we obtain
\begin{equation}\label{3.10}
\mathcal{J}\ll\frac{q^n}{n^2}
\sum_{\substack{(d,g)=1\\P^+(d)\le n}}\frac{\mu^2(d)}{|d|}S(d), 
\end{equation}
where
\[
S(d)=\left|\{(a_1,a_2)\in\mathcal{A}^2:a_1\ne a_2,\ d\mid a_1-a_2\}\right|.
\]

We now estimate \(S(d)\). 
By the definition of \(\mathcal{A}\), for \(a_1,a_2\in\mathcal{A}\),
\[
a_1-a_2=\left(g^{\lfloor k_1^{r_1}\rfloor}-g^{\lfloor\tilde k_1^{r_1}\rfloor}\right)
+\sum_{i=2}^{s-1}\left(g^{\lfloor k_i^{r_i}\rfloor}-g^{\lfloor\tilde k_i^{r_i}\rfloor}\right)
+\left(g^{\lfloor\lfloor k_s^{\lambda}\rfloor^{r_s}\rfloor}
-g^{\lfloor\lfloor\tilde k_s^{\lambda}\rfloor^{r_s}\rfloor}\right).
\]
Fix all parameters except \(k_1\). 
Since \(\sum_{i=1}^{s}1/r'_i=1\), the number of choices for the remaining \(2s-1\) parameters is
\[
\ll n^{1/r_1}\prod_{i=2}^{s-1}n^{2/r_i}\cdot n^{2/(\lambda r_s)}
=n^{2-1/r_1}.
\]
For fixed values of these parameters, the congruence \(d\mid a_1-a_2\) is equivalent to
\[
g^{\lfloor k_1^{r_1}\rfloor}\equiv g^{\lfloor\tilde k_1^{r_1}\rfloor}
+\sum_{i=2}^{s-1}\left(g^{\lfloor\tilde k_i^{r_i}\rfloor}
-g^{\lfloor k_i^{r_i}\rfloor}\right)
+\left(g^{\lfloor\lfloor\tilde k_s^{\lambda}\rfloor^{r_s}\rfloor}
-g^{\lfloor\lfloor k_s^{\lambda}\rfloor^{r_s}\rfloor}\right)\pmod d.
\]
If this congruence has no solution in \(k_1\), the contribution is zero.
Otherwise choose a solution \(k_0\). Since \((d,g)=1\), every solution satisfies
\[
g^{\lfloor k_1^{r_1}\rfloor}\equiv g^{\lfloor k_0^{r_1}\rfloor}\pmod d,
\]
that is,
\[
\lfloor k_1^{r_1}\rfloor\equiv\ell\pmod{m_d},
\]
where \(m_d=\operatorname{ord}_d(g)\) and \(\ell\) is the unique integer with \(0\le\ell<m_d\) and \(\ell\equiv\lfloor k_0^{r_1}\rfloor\pmod{m_d}\).
Therefore, for fixed remaining parameters, the number of possible \(k_1\) is at most
\[
N_d=\max_{0\le\ell<m_d}\left|\{k\in\mathbb N:\delta\lfloor k^{r_1}\rfloor<n,\
\lfloor k^{r_1}\rfloor\equiv\ell\pmod{m_d}\}\right|,
\]
and consequently
\begin{equation}\label{3.11}
S(d)\ll n^{2-1/r_1}N_d. 
\end{equation}

We now bound
\[
\sum_{\substack{(d,g)=1\\P^+(d)\le n}}\frac{\mu^2(d)}{|d|}N_d.
\]
By the function-field Mertens theorem \cite[Theorem 3]{rosen1999}, we
have
\begin{equation}\label{3.12}
\sum_{\substack{(d,g)=1\\P^+(d)\le n}}\frac{\mu^2(d)}{|d|}
=\prod_{\substack{|p|\le n\\p\nmid g}}\left(1+\frac1{|p|}\right)
\le\prod_{|p|\le n}\left(1-\frac1{|p|}\right)^{-1}\ll\log n,
\end{equation}
where the sums and products are over monic polynomials. Let
\(M_0=C_0n^{1-1/r_1}\log n\), where \(C_0=C_0(r_1,\delta)>0\) is chosen
sufficiently large. We claim that if \(m_d>M_0\), then
\begin{equation}\label{3.13}
N_d\le\frac{n^{1/r_1}}{\log n}.
\end{equation}
Indeed, suppose that \(N_d>n^{1/r_1}/\log n\). 
Since \(\delta\lfloor k^{r_1}\rfloor<n\) implies \(k\le C_1n^{1/r_1}\) for some \(C_1=C_1(\delta,r_1)>0\), the \(N_d\) admissible \(k\)'s lie in an interval of length at most \(C_1n^{1/r_1}\). 
Hence there exist two consecutive admissible \(k\)'s, say \(k'<k''=k'+L\), with
\[
L\le\frac{C_1n^{1/r_1}}{N_d-1}
<\frac{2C_1n^{1/r_1}}{N_d}<2C_1\log n.
\]
Since \(\lfloor k'^{r_1}\rfloor\equiv\lfloor k''^{r_1}\rfloor\equiv\ell\pmod{m_d}\) and \(r_1>1\), the positive difference \(\lfloor k''^{r_1}\rfloor-\lfloor k'^{r_1}\rfloor\) is a nonzero multiple of \(m_d\), so
\[
m_d\le\lfloor k''^{r_1}\rfloor-\lfloor k'^{r_1}\rfloor
\le k''^{r_1}-k'^{r_1}+1
\ll_{r_1,\delta}n^{1-1/r_1}L
\ll_{r_1,\delta}n^{1-1/r_1}\log n<M_0
\]
for \(C_0\) sufficiently large, contradicting \(m_d>M_0\). 
This proves (\ref{3.13}). 
Therefore the contribution from \(m_d>M_0\) is, by (\ref{3.12}) and (\ref{3.13}),
\[
\sum_{\substack{(d,g)=1\\P^+(d)\le n\\m_d>M_0}}\frac{\mu^2(d)}{|d|}N_d
\le\frac{n^{1/r_1}}{\log n}
\sum_{\substack{(d,g)=1\\P^+(d)\le n}}\frac{\mu^2(d)}{|d|}
\ll\frac{n^{1/r_1}}{\log n}\cdot\log n=n^{1/r_1}.
\]

It remains to treat the case \(m_d\le M_0\). For simplicity, we restrict ourselves to the case \(r_1\notin\mathbb Z\); the case \(r_1\in\mathbb Z\) can be treated similarly using Lemma~\ref{lem:cong}(i), and we omit the details. 
Applying Lemma~\ref{lem:cong}(ii) with \(r=r_1\), \(m=m_d\) and \(K=C_1n^{1/r_1}\) (so that \(K\asymp n^{1/r_1}\)), we obtain
\[
N_d\ll\frac{n^{1/r_1}}{m_d}
+n^{\frac{1-\eta}{r_1}}m_d^{-\alpha}+m_d^{1/r_1},
\]
where \(\eta=\eta(r_1)>0\) and \(\alpha=\alpha(r_1)>0\) are the constants from Lemma~\ref{lem:cong}(ii). 
We split the sum over \(m_d\le M_0\) into three parts:
\[
\sum_{\substack{(d,g)=1\\P^+(d)\le n\\m_d\le M_0}}\frac{\mu^2(d)}{|d|}N_d
\ll W_1+W_2+W_3,
\]
where
\[
W_1=n^{1/r_1}\sum_{\substack{(d,g)=1\\P^+(d)\le n\\m_d\le M_0}}
\frac{\mu^2(d)}{|d|\,m_d},
\qquad
W_2=n^{\frac{1-\eta}{r_1}}\sum_{\substack{(d,g)=1\\P^+(d)\le n\\m_d\le M_0}}
\frac{\mu^2(d)}{|d|\,m_d^{\alpha}},
\]
and
\[
W_3=\sum_{\substack{(d,g)=1\\P^+(d)\le n\\m_d\le M_0}}
\frac{\mu^2(d)}{|d|}\,m_d^{1/r_1}.
\]
By Lemma~\ref{lem:conv}, both series over all monic \(d\) coprime to \(g\) converge.
Hence
\[
W_1\le n^{1/r_1}\sum_{(d,g)=1}\frac{\mu^2(d)}{|d|\,m_d}\ll n^{1/r_1},
\]
and
\[
W_2\le n^{\frac{1-\eta}{r_1}}\sum_{(d,g)=1}\frac{\mu^2(d)}{|d|\,m_d^{\alpha}}
\ll n^{\frac{1-\eta}{r_1}}\ll n^{1/r_1}.
\]
For \(W_3\), since \(m_d\le M_0=C_0n^{1-1/r_1}\log n\), we have
\[
m_d^{1/r_1}\le M_0^{1/r_1}\ll n^{\frac1{r_1}-\frac1{r_1^2}}(\log n)^{1/r_1}.
\]
Using (\ref{3.12}), this gives
\[
W_3\ll n^{\frac1{r_1}-\frac1{r_1^2}}(\log n)^{1+1/r_1}\ll n^{1/r_1},
\]
the last step following from
\((\log n)^{1+1/r_1}=o(n^{1/r_1^2})\). Combining these estimates,
\begin{equation}\label{3.14}
\sum_{\substack{(d,g)=1\\P^+(d)\le n}}\frac{\mu^2(d)}{|d|}N_d
\ll n^{1/r_1}.
\end{equation}

Finally, by (\ref{3.11}) and (\ref{3.14}),
\[
\sum_{\substack{(d,g)=1\\P^+(d)\le n}}\frac{\mu^2(d)}{|d|}S(d)
\ll n^{2-1/r_1}
\sum_{\substack{(d,g)=1\\P^+(d)\le n}}\frac{\mu^2(d)}{|d|}N_d
\ll n^{2-1/r_1}\cdot n^{1/r_1}=n^2.
\]
Thus \(\mathcal{J}\ll q^n\). Since the contribution of \(\Delta=0\) is also \(\ll q^n\), we obtain \(\sum_fT(f)^2\ll q^n\). This proves (\ref{3.9}) and hence the theorem.
\end{proof}

\end{document}